\documentclass[11pt]{article}

\usepackage[dvips,a4paper,hmargin=2.75cm,vmargin=3cm]{geometry}
\usepackage[utf8]{inputenc}
\usepackage{hyperref}
\usepackage{amssymb,amsmath,amsthm,mathtools}
\usepackage{enumitem}
\usepackage{cite}
\usepackage[mathscr]{euscript}
\usepackage[T1]{fontenc}
\usepackage{lmodern}
\usepackage{fontawesome}
\usepackage{microtype}
\usepackage{graphicx}
\usepackage[normalem]{ulem}
\usepackage{authblk}
\usepackage[small]{titlesec}

\newcommand{\mailto}[1]{\href{mailto:#1}{\nolinkurl{#1}}}

\hypersetup{colorlinks=true,linkcolor=blue,citecolor=blue,
	        pdfinfo={
	        Title   = {Oversampling and aliasing in de Branges 
					   spaces arising from Bessel operators},
	        Author  = {Julio H. Toloza, Alfredo Uribe},
	        Subject = {Manuscript}}
	       }

\titleformat*{\subsection}{\bfseries\itshape}

\frenchspacing

\providecommand{\keywords}[1]
{
\noindent{\small
	\textbf{Keywords:} #1}
}

\providecommand{\msc}[1]
{
\noindent{\small
	\textbf{2010 MSC:} #1}
}


\newtheorem{theorem}{Theorem}[section]
\newtheorem{lemma}[theorem]{Lemma}
\newtheorem{proposition}[theorem]{Proposition}
\newtheorem{corollary}[theorem]{Corollary}
\newtheorem*{thm}{Theorem}

\theoremstyle{definition}

\newtheorem{remark}[theorem]{Remark}


\newcommand{\R}{{\mathbb R}}
\newcommand{\N}{{\mathbb N}}
\newcommand{\Z}{{\mathbb Z}}
\newcommand{\C}{{\mathbb C}}
\newcommand{\K}{{\mathbb K}}

\newcommand{\cC}{\mathcal{C}}
\newcommand{\cB}{\mathcal{B}}
\newcommand{\cD}{\mathcal{D}}

\newcommand{\cO}{\mathcal{O}}
\newcommand{\cPW}{\mathcal{PW}}
\newcommand{\cR}{R}


\DeclareMathOperator{\im}{Im}
\DeclareMathOperator{\dom}{\cD}

\DeclareMathOperator{\assoc}{assoc}

\newcommand{\abs}[1]{\left\lvert #1 \right\rvert}
\newcommand{\abss}[1]{\lvert #1 \rvert}
\newcommand{\norm}[1]{\left\lVert #1 \right\rVert}
\newcommand{\inner}[2]{\left\langle#1,#2\right\rangle}

\newcommand{\cc}[1]{\overline{#1}}

\renewcommand\tilde{\widetilde}

\newcommand{\defeq}{\mathrel{\mathop:}=}




\title {\bf Oversampling and aliasing in de Branges spaces arising from Bessel 
			operators}

\author[1]{Julio H. Toloza%
			\thanks{\faEnvelopeO\, {julio.toloza@uns.edu.ar}}%
			}
\author[2]{Alfredo Uribe%
			\thanks{\faEnvelopeO\, {alfredo.uribe.83@gmail.com}}%
			}
\affil[1]{INMABB\\ 
		 Departamento de Matemática\\
		 Universidad Nacional del Sur (UNS) - CONICET\\
		 Bahía Blanca\\
		 Argentina}
\affil[2]{Departamento de Matemáticas\\
		 Universidad Autónoma Metropolitana\\
		 Av. San Rafael Atlixco 186\\
		 Col. Vicentina, Iztapalapa, C.P. 09340, México D.F.}


\begin{document}

\date{}
\maketitle


\begin{abstract}
We show that a class of de Branges spaces, generated by means of generalized Fourier 
transforms associated with perturbed Bessel differential equations, 
has the properties of oversampling and aliasing.
\end{abstract}

\bigskip
\keywords{de Branges space, oversampling, aliasing, sampling theory,
			singular Schrödinger operator}

\msc{
46E22, 
42C15, 
34L40, 
94A20  
}

\section{Introduction and main results}
\label{sec:intro}

\subsection{A hint at de Branges spaces}

An entire function $E$ belongs to the Hermite-Biehler class if it has the property 
$\abs{E(z)}>\abs{E(\cc{z})}$ for all $z\in\C_+$. Given such a function,
let us define
\begin{equation}
\label{eq:reproducing-kernel}
K(z,w)\defeq
		\begin{cases}
		\frac{E^\#(z)E(\cc{w})-E(z)E^\#(\cc{w})}{2\pi i(z-\cc{w})},
					& z\ne\cc{w},
		\\[1mm]
		\frac{{E^\#}'(z)E(z)-E'(z)E^\#(z)}{2\pi i},
					& z=\cc{w}.			
		\end{cases}
\end{equation}
The de Branges spaces generated by $E$ is the linear set
\begin{equation*}
\cB(E) \defeq \left\{F\text{ entire}:\norm{F}^2\defeq\int_{-\infty}^{\infty}
				\abs{\frac{F(\lambda)}{E(\lambda)}}^2d\lambda<\infty,\;
				\abs{F(z)}^2\le \norm{F}^2 K(z,z)\text{ for all } z\in\C
		 \right\}
\end{equation*}
equipped with the inner product
\begin{equation*}
\inner{F}{G}_\cB \defeq 
	\left(\int_{-\infty}^{\infty}
					\frac{\cc{F(\lambda)}G(\lambda)}
					{\abs{E(\lambda)}^2}d\lambda\right)^{1/2}.
\end{equation*}
$\cB(E)$ is a reproducing kernel Hilbert space whose reproducing kernel is 
precisely \eqref{eq:reproducing-kernel} \cite[Thm.~20]{debranges}.
From now, on we will denote a de Branges space and its associated reproducing
kernel as $\cB$ and $K_\cB(z,w)$, respectively. 
In passing, we note that there are alternative ways of defining a de 
Branges space \cite{debranges,remling,zbMATH06526214}.
Also, there are many Hermite-Biehler functions that generate a given de Branges space
\cite[Thm. 1]{debranges0}.

Let $S_\cB:\dom(S_\cB)\to\cB$ denote the operator defined by
\[
\dom(S_\cB) = \{F\in\cB:zF(z)\in\cB\},
\qquad (S_\cB F)(z) \defeq zF(z).
\]
It is well known that $S_\cB$ is a regular, closed, symmetric operator with 
deficiency indices $(1,1)$ \cite[Prop.~4.2 and Lemma~4.7]{kaltenback}.

Let $S_{\cB,\gamma}$, $\gamma\in[0,\pi)$, denote the canonical 
selfadjoint 
extensions of $S_\cB$ (viz., selfadjoint restrictions of $S_\cB^*$).
Due to the regularity of $S_\cB$, the spectra $\sigma(S_{\cB,\gamma})$ consist 
of isolated eigenvalues of multiplicity equal to one, that moreover satisfy
\[
\bigcup_{\gamma\in[0,\pi)}\sigma(S_{\cB,\gamma})=\R,\quad
\sigma(S_{\cB,\gamma})\cap\sigma(S_{\cB,\gamma'})=\emptyset,\quad\gamma\ne\gamma'.
\]

It is straightforward to verify that $K_\cB(z,w)\in\ker(S^*_\cB-\cc{w}I)$ for 
all $w\in\C$. It follows that 
$\{K_\cB(z,\lambda)\}_{\lambda\in\sigma(S_{\cB,\gamma})}$ is an 
orthogonal basis. Hence, the sampling formula
\begin{equation}
\label{eq:sampling-general}
F(z) = \sum_{\lambda\in\sigma(S_{\cB,\gamma})}
	\frac{K_\cB(z,\lambda)}{K_\cB(\lambda,\lambda)}F(\lambda)
\end{equation}
holds true for all $F\in\cB$. The convergence of this series is in the 
norm, which in turn implies uniform convergence in compact subsets of 
$\C$. 
The Parseval-Plancherel identity implies that
any sequence $\{\delta_\lambda\}_{\lambda\in\sigma(S_{\cB,\gamma})}$ obeying
\begin{equation*}
\sum_{\lambda\in\sigma(S_{\cB,\gamma})}
	\frac{\abs{\delta_\lambda}^2}{K_\cB(\lambda,\lambda)} < \infty
\end{equation*}
yields an approximation to $F$ by means of the formula 
\eqref{eq:sampling-general}, when the samples
$\{F(\lambda)\}_{\lambda\in\sigma(S_{\cB,\gamma})}$ are replaced by
$\{F(\lambda) + \delta_\lambda\}_{\lambda\in\sigma(S_{\cB,\gamma})}$.
In other words, \eqref{eq:sampling-general} is stable under weighted 
$\ell_2$-perturbations.

There is a distinctive structural property of de Branges spaces
related to the subject of this paper.
For the class of spaces discussed here, this
property can be stated as follows: Assume $\cB_1$, $\cB_2$ and $\cB_3$ 
are de Branges spaces such that $\cB_1$ and $\cB_2$ are both isometrically 
contained in $\cB_3$. Then either $\cB_1\subset\cB_2$ or $\cB_2\subset\cB_1$.
A more general form of this assertion is in the classical book by
de Branges \cite[Thm.~35]{debranges}.

\subsection{Main results}

The class of de Branges spaces considered in this work are defined by means of
generalized Fourier transforms associated with perturbed Bessel 
differential equations. Namely,
\begin{equation*}
\cB_{s} 
	:= \left\{F(z)=\int_0^s \xi(z,x)\varphi(x)dx : \varphi \in 
	L^2(0,s)\right\},\quad
\norm{F}^2_{\cB_{s}} 
		= \int_0^s \abs{\varphi(x)}^2dx,
\end{equation*}
where $\xi(z,x)$ is the real entire solution (with respect to $z$) to the 
eigenvalue problem 
\begin{equation*}
-\varphi'' + \left(\frac{\nu^2-1/4}{x^2} + q\right)\varphi = z\varphi,
\quad x\in(0,\infty),\quad \nu\in(0,\infty),\quad z\in\C,
\end{equation*}
subject to the boundary condition
\begin{equation*}  
\lim_{x \to 0+}x^{\nu-\frac12}\left((\nu+1/2)\varphi(x)-x\varphi'(x)\right)=0
\end{equation*}
when $\nu\in(0,1)$. A Hermite-Biehler function that generates $\cB_s$ is
\begin{equation*}
E_s(z) = \xi(z,s) + i\xi'(z,s), 
\end{equation*}
where $'$ denotes derivative with respect to $x$.
In this framework, our main results can be summarized
as follows:

\begin{thm}[oversampling]
Fix $\nu,b\in(0,\infty)$, $a\in(0,b)$ and $\gamma\in[0,\pi)$. 
Assume that $q$ is a real-valued 
function belonging to $\text{AC}_\text{loc}(0,b]$ such that 
$xq(x)\in L^r(0,b)$ for some $r\in(2,\infty]$.
Given $\epsilon=\{\epsilon_n\}\in\ell_\infty(\nu)$ and $F\in\cB_a$,
define
\begin{equation*}
F_\epsilon(z)=\sum_{\lambda_n\in\sigma(S_{b,\gamma})}
		\frac{J_{ab}(z,\lambda_n)}
		{K_b(\lambda_n,\lambda_n)}\left(F(\lambda_n)+\epsilon_n\right),
\end{equation*}
where $\ell_\infty(\nu)$ and $J_{ab}(z,w)$ are defined by 
\eqref{eq:ell-simple} and \eqref{eq:function-Jab} respectively.
Then, for every compact set $\K\subset\C$, there exists $C>0$ 
such that
\[
\abs{F(z)-F_\epsilon(z)} \le C\norm{\epsilon}_{\ell_\infty(\nu)},
	\quad z\in \K,
\]
uniformly for all $F\in\cB_a$.
\end{thm}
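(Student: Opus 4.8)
The plan is to establish the estimate in two stages: first bound the error on the real axis by exploiting the oversampling structure, then propagate the bound to arbitrary compact subsets of $\C$ using the reproducing kernel.

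First I would unwind the definitions. Since $F\in\cB_a$ and $a<b$, there is an isometric inclusion $\cB_a\hookrightarrow\cB_b$ (this is precisely the nesting property of the family $\{\cB_s\}$ coming from the de Branges chain structure recalled in the excerpt; it is also transparent from the integral representation, extending $\varphi$ by zero from $(0,a)$ to $(0,b)$). Consequently the sampling formula \eqref{eq:sampling-general} for $\cB_b$ applies to $F$, giving $F(z)=\sum_{\lambda_n\in\sigma(S_{b,\gamma})} K_b(z,\lambda_n)K_b(\lambda_n,\lambda_n)^{-1}F(\lambda_n)$. The point of the oversampling kernel $J_{ab}(z,w)$ (defined in \eqref{eq:function-Jab}) is that it reproduces elements of $\cB_a$ from the denser $\cB_b$-sampling set, so that $F(z)=\sum_n J_{ab}(z,\lambda_n)K_b(\lambda_n,\lambda_n)^{-1}F(\lambda_n)$ as well — this identity I would either cite from the construction of $J_{ab}$ or verify directly by checking that $J_{ab}(\cdot,w)\in\cB_a$ with the right reproducing property. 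Subtracting, the error becomes
\[
F(z)-F_\epsilon(z) = -\sum_{\lambda_n\in\sigma(S_{b,\gamma})}
	\frac{J_{ab}(z,\lambda_n)}{K_b(\lambda_n,\lambda_n)}\,\epsilon_n,
\]
so the entire problem reduces to bounding $\sum_n \abs{J_{ab}(z,\lambda_n)} K_b(\lambda_n,\lambda_n)^{-1}$ (or rather the weighted version matching the norm $\norm{\epsilon}_{\ell_\infty(\nu)}$) uniformly for $z$ in a compact set.

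The heart of the matter is therefore the summability estimate $\sum_n \abs{J_{ab}(z,\lambda_n)} K_b(\lambda_n,\lambda_n)^{-1}\,w_n < \infty$, locally uniformly in $z$, where $w_n$ are the weights implicit in $\ell_\infty(\nu)$ from \eqref{eq:ell-simple}. Here I would invoke the known asymptotics for perturbed Bessel equations under the hypothesis $xq(x)\in L^r(0,b)$, $r>2$: these give precise growth/decay rates for $\xi(z,x)$ and $\xi'(z,x)$ in $z$ (Bessel-type asymptotics of order $\nu$), hence for $E_b(\lambda_n)$, for $K_b(\lambda_n,\lambda_n)\asymp \norm{\xi(\lambda_n,\cdot)}_{L^2(0,b)}^{-2}$ up to constants, and — crucially — for the numerator $J_{ab}(z,\lambda_n)$, which being built from solutions on the shorter interval $(0,a)$ decays faster in $n$ relative to the $\cB_b$-normalization. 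The gain of a definite power of $\lambda_n$ from the ratio $a/b<1$ (morally, $|\xi(\lambda,a)/\xi(\lambda,b)|$ decays like a negative power or at least does not grow, combined with the oversampling density of $\sigma(S_{b,\gamma})$) is what makes the series converge; this is the standard mechanism by which oversampling buys absolute, stable convergence. I would also need the counting asymptotics $\lambda_n \asymp (c n)^2$ for the eigenvalues, which again follow from the regular singular endpoint theory, to check the series converges against the polynomial weights.

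Finally, to pass from real $z$ to a general compact $\K\subset\C$, I would use that each $J_{ab}(\cdot,\lambda_n)$ is entire and that $\abs{J_{ab}(z,w)}^2 \le K_a(z,z)\,K_a(w,w)$ by Cauchy–Schwarz in $\cB_a$ (since $J_{ab}(\cdot,w)\in\cB_a$); bounding $K_a(z,z)$ uniformly on $\K$ and $K_a(\lambda_n,\lambda_n)$ by the same Bessel asymptotics reduces the complex estimate to the real one with an extra harmless factor $\sup_{z\in\K}K_a(z,z)^{1/2}$. The constant $C$ then depends on $\K$, on $\nu$, $a$, $b$, $\gamma$ and on the $L^r$-norm of $xq$, but not on $F$, which is exactly the claim. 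The main obstacle I anticipate is not the functional-analytic skeleton, which is routine, but assembling the sharp two-sided asymptotic bounds on $\xi$, $\xi'$ and the eigenvalue counting function that are uniform enough to control the tail of the series — that is where the hypothesis $xq\in L^r(0,b)$ with $r>2$ (rather than merely $r=1$) is genuinely used, presumably via earlier lemmas of the paper on the perturbed Bessel transform.
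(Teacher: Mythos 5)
Your overall skeleton is the paper's: derive the exact identity $F(z)=\sum_n J_{ab}(z,\lambda_n)K_b(\lambda_n,\lambda_n)^{-1}F(\lambda_n)$ for $F\in\cB_a$ (the paper gets it from $\varphi=\cR_{ab}\varphi$ for $\varphi\in L^2(0,a)$, leading to \eqref{eq:F-with-modified-kernel}), subtract to isolate $\sum_n J_{ab}(z,\lambda_n)K_b(\lambda_n,\lambda_n)^{-1}\epsilon_n$, and then win by absolute, locally uniform summability of that series (condition \ref{hyp:convergence-oversampling}, together with Lemma~\ref{lem:denominator} and Corollary~\ref{cor:spaces-lp} to translate $\ell_\infty(\nu)$ into the kernel-weighted norm). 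However, there are two genuine gaps in how you propose to prove the summability. First, $J_{ab}(\cdot,w)$ does \emph{not} lie in $\cB_a$: by \eqref{eq:function-Jab} it is the image of $\cR_{ab}(\cdot)\xi(\cc{w},\cdot)$, which is supported on all of $(0,b]$, so $J_{ab}(\cdot,w)\in\cB_b$ only. Hence your inequality $\abs{J_{ab}(z,w)}^2\le K_a(z,z)K_a(w,w)$ is unjustified, and the corrected Cauchy--Schwarz bound $\abs{J_{ab}(z,\lambda_n)}\le\norm{\xi(\cc z)}_{L^2(0,b)}\norm{\xi(\lambda_n)}_{L^2(0,b)}\asymp n^{-\nu-\frac12}$ is quantitatively useless: dividing by $K_b(\lambda_n,\lambda_n)\asymp n^{-2\nu-1}$ and multiplying by $\abs{\epsilon_n}\lesssim n^{\nu+\frac12}$ yields terms of order $n^{2\nu+1}$. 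So your ``propagate from the real axis via the reproducing kernel'' stage both rests on a false inclusion and, even repaired, does not produce a convergent series; in fact it never uses the real-axis estimate at all.

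Second, the actual mechanism for convergence is not the one you describe. The gain does not come from $\abs{\xi(\lambda,a)/\xi(\lambda,b)}$ decaying (both are oscillatory of the same order $t_n^{-\nu-1/2}$), nor from general Bessel asymptotics alone. It comes from a double integration by parts of $\inner{\xi(\cc z)}{\cR_{ab}\,\xi(t_n^2)}_{L^2(0,b)}$ against the eigenvalue equation, which extracts a factor $t_n^{-2}$ precisely because the taper $\cR_{ab}$ in \eqref{eq:function-R} is continuous (its derivative jumps only at $a$ and $b$, producing controllable boundary terms); combined with the decomposition $\xi=\xi_\nu+\xi_{\nu,1}+\Xi$ of \eqref{eq:xi-refined} and the $L^r$, $r>2$, hypothesis to handle the remainder, this gives $\abs{J_{ab}(z,\lambda_n)}\le Cn^{-\nu-\frac32-\delta}$ uniformly on compacts, whence the summand in \ref{hyp:convergence-oversampling} is $\cO(n^{-1-\delta})$. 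This is the entire content of Proposition~\ref{prop:condition-oversampling-general} and the Appendix computations \ref{app:first-computation} and \ref{app:third-computation}; without that extra full power of $t_n^{-2}$ over the trivial bound, the series diverges for every $\nu$. Your proposal defers exactly this step to ``known asymptotics,'' but no off-the-shelf asymptotic supplies it: it has to be manufactured from the oscillation of $\xi(t_n^2,\cdot)$ against the specific weight $\cR_{ab}$.
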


\begin{thm}[aliasing]
Suppose the same hypotheses of the previous theorem, except that
$\gamma\in(0,\pi)$. For every $F\in\cB_b$, define
\begin{equation*}
\tilde{F}(z)
= \sum_{\lambda_n\in\sigma(S_{a,\gamma})}
	\frac{K_a(z,\lambda_n)}{K_a(\lambda_n,\lambda_n)}\, F(\lambda_n).
\end{equation*}
Then, for each compact set $\K\subset\C$, there exists $D>0$ such that
\begin{equation*}
\abs{F(z)-\tilde{F}(z)}
	\le D\norm{(I-P_{ab})F}_{\cB_b},\quad z\in\K,
\end{equation*}
where $P_{ab}:\cB_b\to\cB_b$ is the orthogonal projector onto $\cB_a$. 
\end{thm}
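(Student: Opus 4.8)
The plan is to reduce the aliasing estimate to a bound on the aliasing error in terms of how far $F$ is from the subspace $\cB_a$, exploiting the nesting $\cB_a \subset \cB_b$ and the reproducing-kernel structure. First I would observe that $\tilde F$ depends only on the samples $\{F(\lambda_n)\}$ at $\lambda_n\in\sigma(S_{a,\gamma})$, and that for any $G\in\cB_a$ the sampling formula \eqref{eq:sampling-general} applied in $\cB_a$ gives $G(z)=\sum_{\lambda_n\in\sigma(S_{a,\gamma})} K_a(z,\lambda_n)G(\lambda_n)/K_a(\lambda_n,\lambda_n)$ exactly; hence the map $F\mapsto\tilde F$ is the identity on $\cB_a$. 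Writing $F = P_{ab}F + (I-P_{ab})F$, linearity of the sampling operator gives $F-\tilde F = (I-P_{ab})F - \widetilde{(I-P_{ab})F}$, so it suffices to estimate $\abs{G(z)} + \abs{\tilde G(z)}$ for $G\defeq(I-P_{ab})F$ with $\|G\|_{\cB_b}$ controlling everything.

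Next I would bound each of the two terms on a compact set $\K$. For the first term, the reproducing-kernel inequality in $\cB_b$ gives $\abs{G(z)}\le K_b(z,z)^{1/2}\|G\|_{\cB_b}$, and $\sup_{z\in\K}K_b(z,z)^{1/2}$ is finite since $K_b(z,z)$ is continuous. For the second term, I would need $\tilde G(z)$ to be controlled by $\|G\|_{\cB_b}$; the natural route is to show that the partial series defining $\tilde G$ is the image of $G$ under a bounded operator from $\cB_b$ into the space of entire functions with the topology of uniform convergence on compacts. Concretely, I would write $\tilde G(z) = \sum_{\lambda_n\in\sigma(S_{a,\gamma})} \frac{K_a(z,\lambda_n)}{K_a(\lambda_n,\lambda_n)} G(\lambda_n)$ and apply Cauchy–Schwarz in the weighted $\ell_2$ pairing: $\abs{\tilde G(z)}^2 \le \Big(\sum_n \frac{\abs{K_a(z,\lambda_n)}^2}{K_a(\lambda_n,\lambda_n)}\Big)\Big(\sum_n \frac{\abs{G(\lambda_n)}^2}{K_a(\lambda_n,\lambda_n)}\Big)$. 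The first factor equals $K_a(z,z)$ by the sampling formula applied to $F(\cdot)=K_a(\cdot,z)\in\cB_a$, and is bounded on $\K$. For the second factor I must show $\sum_{\lambda_n\in\sigma(S_{a,\gamma})} \abs{G(\lambda_n)}^2/K_a(\lambda_n,\lambda_n) \le C\|G\|_{\cB_b}^2$, i.e. that restriction-then-resampling at the $\cB_a$-nodes is bounded from $\cB_b$; equivalently that $\{K_a(\cdot,\lambda_n)\}_{\lambda_n\in\sigma(S_{a,\gamma})}$, viewed inside $\cB_b$, is a Bessel sequence. This should follow from the isometric inclusion $\cB_a\hookrightarrow\cB_b$ together with the fact that these kernels form an orthogonal basis of $\cB_a$ — the map $G\mapsto\{\inner{G}{K_a(\cdot,\lambda_n)}_{\cB_b}\}$ is, up to the identification of inner products on $\cB_a$, exactly the orthogonal projection $P_{ab}$ followed by the (orthonormalized) basis expansion, hence bounded with norm at most $1$.

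I expect the main obstacle to be the last point: making rigorous that the de Branges sampling nodes of the smaller space behave as a Bessel sequence when measured in the larger space. The subtlety is that the inner product of $\cB_a$ as a subspace of $\cB_b$ coincides (isometrically) with its intrinsic inner product — this is precisely the structural "isometric containment" property recalled in the excerpt — so that $\inner{G}{K_a(\cdot,\lambda_n)}_{\cB_b} = \inner{P_{ab}G}{K_a(\cdot,\lambda_n)}_{\cB_b} = (P_{ab}G)(\lambda_n)$, and Parseval in $\cB_a$ then gives $\sum_n \abs{(P_{ab}G)(\lambda_n)}^2/K_a(\lambda_n,\lambda_n) = \|P_{ab}G\|_{\cB_a}^2 \le \|G\|_{\cB_b}^2$. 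But to invoke this one must first confirm that $\cB_a$ is genuinely isometrically contained in $\cB_b$ for the Bessel-type spaces at hand (which requires $a<b$ and the regularity hypotheses on $q$ so that $\cB_a$ and $\cB_b$ are both honest de Branges spaces with the right ordering), and that $P_{ab}G$ does indeed interpolate $G$ at the nodes in the appropriate sense. Once these are in place, combining the two bounds yields $\abs{F(z)-\tilde F(z)} = \abs{G(z)-\tilde G(z)} \le \big(\sup_{z\in\K}K_b(z,z)^{1/2} + \sup_{z\in\K}K_a(z,z)^{1/2}\big)\|(I-P_{ab})F\|_{\cB_b}$, giving the claim with $D$ the bracketed constant, uniformly in $F\in\cB_b$.
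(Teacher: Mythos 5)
Your reduction $F-\tilde F = G - \tilde G$ with $G \defeq (I-P_{ab})F$, the reproducing--kernel bound $\abs{G(z)}\le K_b(z,z)^{1/2}\norm{G}_{\cB_b}$, and the Cauchy--Schwarz step
$\abs{\tilde G(z)}^2\le K_a(z,z)\sum_n\abs{G(\lambda_n)}^2/K_a(\lambda_n,\lambda_n)$
are all sound. The gap is in your treatment of the second factor. The point evaluation $G\mapsto G(\lambda_n)$ on $\cB_b$ is represented by $K_b(\cdot,\lambda_n)$, \emph{not} by $K_a(\cdot,\lambda_n)$: one has $\inner{K_a(\cdot,\lambda_n)}{G}_{\cB_b}=(P_{ab}G)(\lambda_n)$, which is not $G(\lambda_n)$. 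For your $G=(I-P_{ab})F$ that quantity vanishes identically, so your Parseval computation establishes only $\sum_n\abs{(P_{ab}G)(\lambda_n)}^2/K_a(\lambda_n,\lambda_n)=0$; if this were the factor occurring in your Cauchy--Schwarz inequality it would force $\tilde G\equiv 0$ and hence $\tilde F=P_{ab}F$, which is false in general. What you actually need is a Plancherel--P\'olya type inequality
$\sum_{\lambda_n\in\sigma(S_{a,\gamma})}\abs{G(\lambda_n)}^2/K_a(\lambda_n,\lambda_n)\le C\norm{G}_{\cB_b}^2$ for all $G\in\cB_b$,
i.e.\ that the kernels $K_b(\cdot,\lambda_n)/K_a(\lambda_n,\lambda_n)^{1/2}$ of the \emph{larger} space, taken at the nodes of the \emph{smaller} one, form a Bessel sequence in $\cB_b$. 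This is not a formal consequence of the isometric inclusion $\cB_a\subset\cB_b$ and of $\{K_a(\cdot,\lambda_n)\}$ being an orthogonal basis of $\cB_a$; it is precisely where the analytic content of the theorem lives, and your argument does not supply it. (A secondary omission: even the convergence of the series defining $\tilde F$ for $F\in\cB_b\setminus\cB_a$ hinges on the same missing estimate.)

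For comparison, the paper does not argue through point evaluations in $\cB_b$ at all. It proves the sufficient condition \ref{hyp:convergence-undersampling} (Proposition~\ref{prop:condition-aliasing-general}): the series $\sum_n K_a(z,\lambda_n)\xi(\lambda_n,x)/K_a(\lambda_n,\lambda_n)$ converges absolutely and uniformly on $\K\times[0,b]$, which requires the decomposition $\xi=\xi_\nu+\xi_{\nu,1}+\Xi$, the Bessel asymptotics, Lemma~\ref{lem:denominator} for the denominators, and Lemma~\ref{lem:relief} for the delicate boundary term. This yields a continuous extension $\xi^{\mathrm{ext}}_a(z,\cdot)$ of $\xi(z,\cdot)\vert_{[0,a]}$ to $[0,b]$ with $\tilde F(z)=\inner{\xi^{\mathrm{ext}}_a(\cc{z})}{\psi}_{L^2(0,b)}$ (Lemma~\ref{propiedades de xi extendida}), whence $\abs{F(z)-\tilde F(z)}\le h_{ab}(z)\int_a^b\abs{\psi(x)}\,dx\le\sqrt{b-a}\,h_{ab}(z)\norm{(I-P_{ab})F}_{\cB_b}$. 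If you wish to retain your cleaner functional--analytic skeleton, you must still prove the Bessel--sequence inequality above, and the estimates needed for that are of the same nature and difficulty as those in Proposition~\ref{prop:condition-aliasing-general}.
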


It is known that $\cB_s=\cB_{\nu,s}$ setwise, where 
$\cB_{\nu,s}$ is the de Branges space associated with $q\equiv 0$ (and 
the same value of $\nu$), also under the hypothesis $xq(x)\in L^r(0,s)$ with 
$r\in(2,\infty]$
\cite[Thm.~4.2]{siltol2}.
Since it is natural to consider sampling formulas as regular for the case 
$q\equiv 0$ and irregular otherwise, our theorems above 
provide estimates for oversampling and aliasing error on a (relatively 
restricted) set of irregular sampling formulas for regularized Hankel transform 
of functions with compact support
in $\R_+$. The restriction $r>2$ is a technical limitation due to the 
perturbation methods
involved in both \cite{siltol2} and the present paper; we believe that the 
results exposed here should hold under the weaker assumption
$xq(x)\in L^1(0,s)$.

\subsection{A bit of history}

The notions of oversampling and aliasing
stem from the theory of Paley-Wiener spaces 
\cite{MR1473224,MR1270907}, that is, the spaces of Fourier 
transform of functions with given compact support centered at zero,
\begin{equation*}
	\cPW_a \defeq \left\{F(z)=\int_{-a}^a e^{-ixz}\phi(x)dx : \phi\in 
	L^2(-a,a)\right\}.
\end{equation*}
The linear set $\cPW_a$, equipped with the norm $\norm{F}=\norm{\phi}$, is
a de Branges space generated by the Hermite-Biehler 
function $E_a(z)=\exp(-iza)$.
In this setting, the sampling formula \eqref{eq:sampling-general} is known as the 
Whittaker-Shannon-Kotelnikov theorem, and takes the form
\begin{equation}
\label{eq:wsk-sampling}
	F(z) =\sum_{n\in\Z}
		\mathcal{G}_a\left(z,\frac{n\pi}{a}\right)F\left(\frac{n\pi}{a}\right),
	\qquad
	\mathcal{G}_a\left(z,w\right)
		\defeq\frac{\sin\left[a(z-\cc{w})\right]}{a(z-\cc{w})},
\end{equation}
The function $\mathcal{G}_a\left(z,w\right)$ is referred to as the sampling 
kernel, while the separation between sampling points $\pi/a$ is known as the
Nyquist rate.

As shown in
\cite[Thm. 7.2.5]{MR1473224}, every $F\in\cPW_a\subset\cPW_b$ ($a<b$) admits
the representation
\begin{equation}
  \label{eq:wsk-oversampling}
  	F(z)=\sum_{n\in\Z}\mathcal{G}_{ab}\left(z,\frac{n\pi}{b}\right)
	F\left(\frac{n\pi}{b}\right),
\end{equation}
with the modified sampling kernel 
\begin{equation}
\label{eq:oversampling-PW}
\mathcal{G}_{ab}(z,w)
 \defeq \frac{2}{b-a}\frac{\cos((z-\cc{w})a) - \cos((z-\cc{w})b)}{(z-\cc{w})^2}.
\end{equation}
While the convergence
of the sampling formula \eqref{eq:wsk-sampling} is unaffected by 
$\ell_2$-perturbations of the samples $F\left(\frac{n\pi}{a}\right)$, 
the oversampling formula \eqref{eq:wsk-oversampling} is more stable in 
the sense that it  
converges under $\ell_\infty$-perturbations on the samples.  That is, if the
sequence $\{\delta_n\}_{n\in\Z}$ is bounded and one defines
\begin{equation*}
	F_\delta(z)\defeq
	\sum_{n\in\Z}
	\mathcal{G}_{ab}\left(z,\frac{n\pi}{b}\right)
	\left[F\left(\frac{n\pi}{b}\right)+\delta_n\right],
\end{equation*}
then $\abss{F_\delta(z)-F(z)}$ is uniformly bounded in compact subsets of $\C$ 
and, moreover, uniformly bounded on the real line.
In other words, a more stable sampling formula is obtained at the expense of 
collecting samples at a higher Nyquist rate. 

Aliasing, on the other hand, approximates a function 
$F\in\cPW_b\setminus\cPW_a$ by another one formally constructed using 
the sampling
formula \eqref{eq:wsk-sampling}, namely,
\begin{equation}
\label{eq:wsk-undersampling}
  \widetilde{F}(z)
  	=\sum_{n\in\Z}
   \mathcal{G}_a\left(z,\frac{n\pi}{a}\right)F\left(\frac{n\pi}{a}\right).
\end{equation}
As shown in \cite[Thm.~7.2.9]{MR1473224}, the series in 
\eqref{eq:wsk-undersampling} is indeed convergent and,
moreover,
$\abss{\widetilde{F}(z) - F(z)}$ is uniformly bounded in compact subsets
of $\C$. Formula \eqref{eq:wsk-undersampling} yields in fact an
approximation not only for functions in $\cPW_b\setminus\cPW_a$, but
for the Fourier transform of elements in $L^1(\R)\cap L^2(\R)$.

Generalizations of sampling formula \eqref{eq:wsk-sampling} have been a subject
of research for quite some time within the theory of reproducing 
kernel Hilbert spaces \cite{butzer,higgins1,jorgensen-2016,MR3011977}; 
a classical result is the Kramer's sampling theorem.
The particular case of reproducing kernel Hilbert spaces related
to the Bessel-Hankel transform ($q\equiv 0$ in the context of 
this paper) has been studied by Higgins in \cite{higgins0}.
Sampling theorems associated with Sturm-Liouville problems (hence somewhat related
to the methods involved in this paper) have been discussed in 
\cite{zayed1,zayed2,zayed3}. See also \cite{zayed4}.

Analysis of error due to noisy samples and aliasing in Paley-Wiener spaces goes 
back at least to \cite{papoulis-1966}.  More recent literature on the subject 
is, for instance,
\cite{bailey-2015,bailey-2016,chen,delprete,bodmann-2012,gensun,higgins1,kozachenko-2016,thakur}.
Oversampling in shift-invariant spaces is considered in \cite{garcia0}.

To the best of our knowledge, oversampling and
aliasing on de Branges spaces besides the Paley-Wiener
class have not been discussed until recently, where this subject has been 
touched upon for de Branges spaces associated with regular Schrödinger 
operators using perturbative methods \cite{siltoluri}. In this paper we extend
the results of \cite{siltoluri} to the larger class of de Branges spaces
characterized by \cite[Thm.~4.2]{siltol2} (see Theorem~\ref{thm:invariance-spaces}
below).

\subsection{Organization of this paper}

In Section~\ref{sec:prel} we summarize the necessary results concerning
the perturbation theory of Bessel operators; the main source
is (part of) the work by Kostenko, Sakhnovich and Teschl on a scalar singular
Weyl-Titchmarsh theory \cite{kostenko,kt}. 
Section~\ref{sec:oversampling} is devoted to oversampling, while the 
results concerning aliasing are the subject of Section~\ref{sec:aliasing}.
Some necessary albeit tedious computations are presented in the Appendix.

\section{de Branges spaces arising from Bessel operators}
\label{sec:prel}

\subsection{The unperturbed problem}

Let us consider the differential expression
\begin{equation*}
\tau_\nu \defeq 
	-\frac{d^2}{d x^2}+
			\frac{\nu^2-1/4}{x^2}, \quad x\in(0,s], 
			\quad \nu \in [0,\infty),
\end{equation*}
with $s\in(0,\infty)$. 
It is well-known that $\tau_\nu$ is regular at $x=s$, whereas at $x=0$ 
it is in the limit point case if $\nu\geq 1$ or in the limit circle case 
if $\nu\in [0,1)$. 
The eigenvalue problem $\tau_\nu \varphi = z \varphi$  ($z\in\C$) has linearly 
independent solutions
\begin{align}
\xi_\nu(z,x)
	&= z^{-\frac{\nu}{2}}\sqrt{\frac{\pi x}{2}}J_{\nu}(\sqrt{z}x),
				\label{eq:fundamental-solution-unperturbed-good}
\\[1mm]
\theta_\nu(z,x)
	&= z^{\frac{\nu}{2}}\sqrt{\frac{\pi x}{2}}
		\begin{cases}
		\frac{1}{\sin(\nu\pi)}J_{-\nu}(\sqrt{z}x),& \nu\in\R_+\setminus\N_0,
		\\[1mm]
		\frac{1}{\pi}\log(z)J_{\nu}(\sqrt{z}x) - Y_{\nu}(\sqrt{z}x),
												  & \nu\in\N_0,
		\end{cases}
		\label{eq:fundamental-solution-unperturbed-bad}
\end{align}
where $J_\nu$ and $Y_\nu$ are the Bessel and Neumann functions; $\sqrt{\cdot}$ denotes 
the main branch of the square root. Both solutions
are real entire functions with respect to $z$, with Wronskian
\begin{equation*}
W_x\left(\theta_\nu(z),\xi_\nu(z)\right) 
	\defeq \theta_\nu(z,x)\xi_\nu'(z,x) - \theta_\nu'(z,x)\xi_\nu(z,x) \equiv 1.
\end{equation*}
Moreover, $\xi_\nu(z,\cdot)$ is square-integrable and satisfies the boundary 
condition
\begin{equation}  
\label{eq:boundary-left-condition-limit-circle}
\lim_{x \to 0+}x^{\nu-\frac12}\left((\nu+1/2)\varphi(x)-x\varphi'(x)\right)=0
\end{equation}
when $\nu\in[0,1)$.

\begin{remark}
\label{rem:nu-not-zero}
In order to simplify the discussion, in what follows we shall assume
$\nu>0$. This is because the case $\nu=0$ entails the occurrence of 
logarithmic expressions that would require a somewhat clumsier, 
separated analysis. 
In our opinion, this extra workload would 
not add anything substantial to our results.
\end{remark}

As shown in \cite[Lemmas~A.1 and A.2]{kostenko}, 
\begin{gather}
\label{eq:A1-kost-sak-tes-unperturb}
	\abs{\xi_\nu(z,x)}\le C
	\left(\frac{x}{1+\abs{\sqrt{z}}x}\right)^{\nu+\frac{1}{2}}
	e^{\abs{\im(\sqrt{z})}x},
	\\
\label{eq:A2-kost-sak-tes-unperturb}
	\abs{\xi'_\nu(z,x)}\le C
	\left(\frac{x}{1+\abs{\sqrt{z}}x}\right)^{\nu-\frac{1}{2}}
	e^{\abs{\im(\sqrt{z})}x}.
\end{gather}

Let $H_{\nu,s}$ denote the closure of the minimal operator defined by 
$\tau_\nu$, plus boundary condition
\eqref{eq:boundary-left-condition-limit-circle} whenever $\nu\in[0,1)$,
on the interval $(0,s]$. This operator is regular, symmetric and has
deficiency indices $(1,1)$. As a consequence, 
$\xi_\nu(z,\cdot) \in \ker(H_{\nu,s}^* - z I)$ for all $z\in\C$.

Let $H_{\nu,s,\gamma}$ denote the selfadjoint extension of $H_{\nu,s}$
associated with the boundary condition
$\varphi(s)\cos(\gamma) + \varphi'(s)\sin(\gamma) = 0$, $\gamma\in[0,\pi)$.
For $\gamma=0$ (Dirichlet boundary condition at $x=s$), the spectrum of 
$H_{\nu,s,0}$ is given by the zeros $\{j_{\nu,n}\}_{n=1}^\infty$ of $J_\nu$,
namely,
\begin{equation*}
\sigma(H_{\nu,s,0})= \left\{\left(\frac{j_{\nu,n}}{s}\right)^2\right\}_{n\in\N}.
\end{equation*}
For $\gamma\in(0,\pi)$, the spectrum of $\sigma(H_{\nu,s,\gamma})$ is given 
by the zeros of the real entire function 
$\xi_{\nu}'(z,s)+\xi_{\nu}(z,s)\cot\gamma$. 
The lowest zero of this function, denoted $\lambda_{\nu,0}^\gamma$, has
the same sign as $\nu+1/2+s\cot\gamma$. All the other zeros are positive, thus
we can write
\begin{equation}
\label{eq:unperturbed-spectrum-gamma-not-zero}
\sigma(H_{\nu,s,\gamma})
	= \left\{\left(\frac{j_{\nu,n}^\gamma}{s}\right)^2\right\}_{n\in\N}
		\cup\big\{\lambda_{\nu,0}^\gamma\big\},
\end{equation}
where $\{j_{\nu,n}^\gamma\}_{n=1}^\infty$ are the positive zeros of
\begin{equation} 
\label{eq:condition-one}
wJ_{\nu+1}(w)-\left(\nu+\frac{1}{2}+ s\cot\gamma\right)J_\nu(w).
\end{equation}
We recall the asymptotic formulas (cf. \cite[Eqs.~2.11 and 2.12]{kostenko})
\begin{align}
j_{\nu,n}
&=	\label{eq:estimate-teschl-l}
	\left(n+\frac{2\nu-1}{4}\right)\pi+\mathcal{O}\left(n^{-1}\right),
	\quad n \to \infty,
\\[1mm]
j_{\nu,n}^\gamma
&=	\label{eq:estimate-teschl-2}
	j_{\nu+1,n}+\cO(n^{-1})=
	\left(n+\frac{2\nu+1}{4}\right)\pi+\cO(n^{-1}),
	\quad n \to \infty,
\end{align}
and the asymptotic expansion
\begin{align}
J_\nu(z)
	&=\sqrt{\frac{2}{\pi z}}\left(
		\cos\left(z-\frac{\nu \pi}{2}-\frac{\pi}{4}\right)
		+ e^{\abs{\im (z)}}\cO\left(\abs{z}^{-1}\right)\right),
		\quad \abs{z}\to\infty,\label{eq:asymptotic-bessel-j}
\end{align}
where the error is uniform in sectors of the form
$\{z\in\C:\abs{z}>r \land \arg(z)\in[-\pi+\delta,\pi-\delta]\}$
\cite[Eq.~10.7.8]{nist}.

Associated with $H_{\nu,s}$, there is the de Branges space
\begin{equation}
\label{eq:associated-dB-space-zero-q}
\cB_{\nu,s} 
	:= \left\{F(z)=\int_0^s \xi_\nu(z,x)\varphi(x)dx : \varphi \in 
	L^2(0,s)\right\},
	\quad
	\norm{F}^2_{\cB_{\nu,s}} = \int_0^s \abs{\varphi(x)}^2dx;
\end{equation}
$E_{\nu,s}(z) \defeq \xi_\nu(z,s) + i \xi_\nu'(z,s)$
is a Hermite-Biehler function that generates $\cB_{\nu,s}$.

Increasing values of the parameter $\nu$ makes $H_{\nu,s}$ more singular; 
a precise meaning of this assertion 
can be found in \cite{kt}. This singular character is also reflected
in the associated de Branges space, in the sense stated next.

\begin{theorem}[cf.~{\cite[Thm.~5.1]{langer-woracek}}, 
{\cite[Thm.~3.1]{siltol2}}]
\label{thm:asocc-N}
Fix $\nu,s\in(0,\infty)$. There exists a real, zero-free entire function 
lying in the set
\begin{equation*}
\assoc_{N(\nu)}\cB_{\nu,s} \defeq \cB_{\nu,s} + z\cB_{\nu,s} 
		+ \cdots + z^{N(\nu)}\cB_{\nu,s}, 
\end{equation*}
where ${N(\nu)} \defeq \min\{n\in\N:n>\frac{\nu+1}{2}\}$; no such a function
exists within $\assoc_{k}\cB_s$ for any $0\le k< N(\nu)$.
\end{theorem}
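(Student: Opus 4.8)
The plan is to boil the statement down to a single integrability condition on $E_{\nu,s}$ and then read it off from Bessel asymptotics.

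\textbf{Step 1 (reduction to the constant function).} By \eqref{eq:A1-kost-sak-tes-unperturb}, every $F\in\cB_{\nu,s}$ satisfies $\abs{F(z)}\le C\,\E^{s\abs{z}^{1/2}}$, hence is entire of order at most $1/2$; multiplying by $z,\dots,z^{k}$ and summing preserves order $\le 1/2<1$, so every element of $\assoc_{k}\cB_{\nu,s}$ is entire of order $<1$. By the Hadamard factorization theorem a zero-free entire function of order $<1$ is a constant, so a real, zero-free entire function in $\assoc_{k}\cB_{\nu,s}$ is a nonzero real constant; since $\assoc_{k}\cB_{\nu,s}$ is a linear subspace, the theorem is equivalent to the two assertions $1\in\assoc_{N(\nu)}\cB_{\nu,s}$ and $1\notin\assoc_{k}\cB_{\nu,s}$ for $0\le k<N(\nu)$.

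\textbf{Step 2 (reduction to an integral).} By the characterization of associated spaces behind \cite[Thm.~5.1]{langer-woracek} and \cite[Thm.~3.1]{siltol2}, an entire function $G$ lies in $\assoc_{k}\cB(E)$ iff $G/E$ and $G^{\#}/E$ are of bounded type and nonpositive mean type in $\C_{+}$ and $\int_{\R}\abs{G(t)}^{2}(1+t^{2})^{-k}\abs{E(t)}^{-2}\,dt<\infty$. For $G\equiv 1$ and $E=E_{\nu,s}$ the bounded-type and mean-type requirements hold automatically: $E_{\nu,s}=\xi_\nu(\cdot,s)+\I\,\xi_\nu'(\cdot,s)$ is Hermite--Biehler and built from entire functions of order $1/2$, hence of bounded type and mean type $0$ in $\C_{+}$ with no zeros there, so $1/E_{\nu,s}$ has nonpositive mean type (and the same holds for $G^{\#}/E_{\nu,s}$, since $G^{\#}\equiv 1$). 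Thus everything comes down to deciding for which $k$ the integral
\[
I_{k}\defeq\int_{-\infty}^{\infty}\frac{dt}{(1+t^{2})^{k}\,\abs{E_{\nu,s}(t)}^{2}}
\]
is finite. For the ``no such function'' half the full characterization is not even needed: if $1=\sum_{j=0}^{k}z^{j}F_{j}$ with $F_{j}\in\cB_{\nu,s}$, then $F_{j}/E_{\nu,s}\in L^{2}(\R)$ by the definition of $\cB(E)$, and from $\abs{t}^{j}\le(1+t^{2})^{k/2}$ for $j\le k$ one gets $(1+t^{2})^{-k/2}\abs{E_{\nu,s}(t)}^{-1}\le\sum_{j\le k}\abs{F_{j}(t)}\,\abs{E_{\nu,s}(t)}^{-1}\in L^{2}(\R)$, whence $I_{k}<\infty$.

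\textbf{Step 3 (asymptotics of $\abs{E_{\nu,s}(t)}^{2}$).} On $(-\infty,0]$, $\abs{E_{\nu,s}(t)}^{2}=\xi_\nu(t,s)^{2}+\xi_\nu'(t,s)^{2}$ is continuous and strictly positive (a nontrivial solution of $\tau_\nu\varphi=t\varphi$ has no double zero at $x=s$) and grows like $\E^{2s\sqrt{\abs{t}}}$ as $t\to-\infty$, so it contributes only a finite amount to $I_{k}$ for every $k$. For $t\to+\infty$, feeding \eqref{eq:asymptotic-bessel-j} and its derivative analogue into $\xi_\nu(t,s)=t^{-\nu/2}\sqrt{\pi s/2}\,J_\nu(\sqrt{t}\,s)$ gives, with $\phi_\nu(t)\defeq\sqrt{t}\,s-\nu\pi/2-\pi/4$,
\[
\xi_\nu(t,s)^{2}\asymp t^{-\nu-1/2}\cos^{2}\phi_\nu(t),\qquad \xi_\nu'(t,s)^{2}\asymp t^{-\nu+1/2}\sin^{2}\phi_\nu(t),
\]
so $\abs{E_{\nu,s}(t)}^{2}\asymp t^{-\nu-1/2}\cos^{2}\phi_\nu(t)+t^{-\nu+1/2}\sin^{2}\phi_\nu(t)$: it is bounded below by a constant times $t^{-\nu-1/2}$ but attains that order only in shrinking neighbourhoods of the zeros $t_{n}\sim(n\pi/s)^{2}$ of $\sin\phi_\nu$, where $\sin\phi_\nu(t)\approx\pm\tfrac{s}{2\sqrt{t_{n}}}(t-t_{n})$. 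A Lorentzian-type estimate there shows the $n$-th such neighbourhood contributes $\asymp t_{n}^{\nu+1/2-2k}\asymp n^{2\nu+1-4k}$ to $I_{k}$, dominating the contribution of the complementary (``bulk'') region, so that $I_{k}<\infty\iff 2\nu+1-4k<-1\iff k>(\nu+1)/2\iff k\ge N(\nu)$. Combined with Steps 1--2 this completes the argument.

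The main obstacle is Step 3: one must extract not merely the growth order of $\xi_\nu(\cdot,s)$ and $\xi_\nu'(\cdot,s)$ but their precise, mutually out-of-phase $\cos^{2}$/$\sin^{2}$ behaviour --- the feature that keeps $\abs{E_{\nu,s}(t)}^{2}$ from vanishing while letting it dip to order $t^{-\nu-1/2}$ --- and then run the near-zero estimate carefully enough to pin down the exponent $2\nu+1-4k$ on which convergence hinges. These Bessel-function computations are of exactly the sort the authors defer to the Appendix.
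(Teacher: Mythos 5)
The paper never proves Theorem~\ref{thm:asocc-N}: it is imported wholesale from \cite[Thm.~5.1]{langer-woracek} and \cite[Thm.~3.1]{siltol2}, so there is no internal proof to measure your argument against; what you have written is a reconstruction of how those references obtain the result, and as such it is essentially correct. Step~1 is sound: \eqref{eq:A1-kost-sak-tes-unperturb} gives order $\le 1/2$ for every element of $\cB_{\nu,s}$, so Hadamard reduces the statement to ``$1\in\assoc_k\cB_{\nu,s}$ iff $k\ge N(\nu)$.'' Step~3 lands on the right exponent: the dips of $\abs{E_{\nu,s}(t)}^2$ to order $t^{-\nu-1/2}$ occur on unit-length windows around the points $t_n\asymp n^2$, each window contributes $\asymp n^{2\nu+1-4k}$ to $I_k$, and since the bulk condition $2k>\nu+\tfrac12$ is weaker than $2k>\nu+1$, one gets $I_k<\infty$ iff $k>(\nu+1)/2$ iff $k\ge N(\nu)$, as required. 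Two caveats. First, the displayed ``$\asymp$'' for $\xi_\nu'(t,s)^2$ is not a two-sided pointwise bound: near the zeros of $\sin\phi_\nu$ the $\cO(\abss{z}^{-1})$ error in \eqref{eq:asymptotic-bessel-j} dominates $\sin\phi_\nu$ itself. Your conclusion survives because $\cos^2\phi_\nu\approx 1$ there keeps $\abs{E_{\nu,s}(t)}^2\gtrsim t^{-\nu-1/2}$, but a complete write-up must carry these error terms through the near-zero estimate. Second, and more substantively, the positive half of the theorem rests entirely on the sufficiency direction of the characterization $\assoc_k\cB(E)=\{G:\ G/E,\ G^\#/E\ \text{of bounded type and nonpositive mean type},\ \int_\R\abs{G(t)}^2(1+t^2)^{-k}\abs{E(t)}^{-2}dt<\infty\}$, which you assert without proof; that characterization is precisely the content of the theorems the paper cites, so your argument should be read as an unpacking of the citation rather than an independent proof. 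Your elementary derivation of the necessity half, using only $F_j/E\in L^2(\R)$, is a genuinely self-contained and correct addition.
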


\subsection{Adding a perturbation}

Given $s\in(0,\infty)$, consider the differential expression 
\begin{equation}
	\label{eq:differential-expression}
	\tau \defeq -\frac{d^2}{dx^2}+
			\frac{\nu^2-1/4}{x^2}+q(x), \quad x\in(0,s), 
			\quad \nu \in [0,\infty).
\end{equation}
We assume that $q\in L^1_{\text{loc}}(0,s)$ is a real-valued function such that
$\widetilde{q}\in L^1(0,s)$, where 
\begin{equation}
\label{eq:conditions-q(x)}
\widetilde{q}(x)
	\defeq \begin{cases}
			x q(x) 						 & \text{if } \nu > 0,\\
			x \left(1-\log(x)\right)q(x) & \text{if } \nu = 0.
\end{cases}
\end{equation}

As shown in \cite[Thm. 2.4]{kostenko}, $\tau$ is regular at $x=s$ whereas at $x=0$ 
it is in the limit point case if $\nu\geq 1$ or in the limit circle case 
if $\nu\in [0,1)$. 

The expression \eqref{eq:differential-expression}, along with the boundary 
condition \eqref{eq:boundary-left-condition-limit-circle} when $\nu\in [0,1)$,
originates a closed, regular, symmetric operator whose deficiency indices are both
equal to $1$ \cite[Sect. 4]{siltol2}. We denote this operator by $H_s$
(the symbol $H_{\nu,q,s}$ would be more accurate but also clumsier).

The corresponding one-parameter family of selfadjoint extensions $H_{s,\gamma}$
($0\le \gamma < \pi$) is determined by the usual boundary condition at $x=s$,
\begin{equation}
\label{eq:domain-selfadjoint-extensions}
D(H_{s,\gamma})
:=\left\{\varphi\in L^2(0,s): \varphi,\varphi'\in\text{AC}(0,s),
\tau\varphi\in L^2(0,s),
\varphi(s)\cos{\gamma}=-\varphi'(s)\sin{\gamma}\right\}
\end{equation}
plus the boundary condition \eqref{eq:boundary-left-condition-limit-circle}
when $\nu\in [0,1)$. The spectrum of $H_{s,\gamma}$ is purely discrete, of
multiplicity one, with at most a finite number of negative eigenvalues 
\cite[Thm. 2.4]{kostenko}.

We henceforth assume $\nu>0$ (see Remark~\ref{rem:nu-not-zero}).
By \cite[Lemma 2.2]{kostenko}, the eigenvalue equation 
$\tau\varphi = z\varphi$ ($z\in\C$) admits a solution $\xi(z,x)$, 
real entire with respect to $z$, with derivative $\xi'(z,x)$ also
real entire, that satisfy the estimates
\begin{equation}
\label{eq:estimate-delta-xi}
\abs{\xi(z,x)-\xi_\nu(z,x)}
	\le C \left(\frac{x}{1+\abs{\sqrt{z}}x}\right)^{\nu+\frac12}
		e^{\abs{\im(\sqrt{z})}x}\int_0^x\frac{u\abs{q(u)}}{1+\abs{\sqrt{z}}u}du,
\end{equation}
and
\begin{equation}
\label{eq:estimate-delta-xi-prime}
\abs{\xi'(z,x)-\xi_\nu'(z,x)}
	\le C \left(\frac{x}{1+\abs{\sqrt{z}}x}\right)^{\nu-\frac12}
		e^{\abs{\im(\sqrt{z})}x}\int_0^x\frac{u\abs{q(u)}}{1+\abs{\sqrt{z}}u}du,
\end{equation}
for some constant $C=C(\nu,q,s)$, with $\nu,s\in(0,\infty)$, so the bounds above
are uniform for $x\in(0,s]$.
Note that \eqref{eq:A1-kost-sak-tes-unperturb} and \eqref{eq:estimate-delta-xi} 
(respectively, \eqref{eq:A2-kost-sak-tes-unperturb} and
\eqref{eq:estimate-delta-xi-prime}) imply
\begin{gather}
\label{eq:another-estimate}
\abs{\xi(z,x)}
	\le C \left(\frac{x}{1+\abs{\sqrt{z}}x}\right)^{\nu+\frac12}
			e^{\abs{\im(\sqrt{z})}x}
			\left(1+\norm{\widetilde{q}}_{L^1(0,s)}\right),
	\\[1mm]
\label{eq:another-estimate-prime}
\abs{\xi'(z,x)}
	\le C \left(\frac{x}{1+\abs{\sqrt{z}}x}\right)^{\nu-\frac12}
			e^{\abs{\im(\sqrt{z})}x}
			\left(1+\norm{\widetilde{q}}_{L^1(0,s)}\right),
\end{gather}
for all $x\in(0,s]$.

\begin{lemma}[cf. {\cite[Lemma~4.1]{siltol2}}]
\label{lem:norm-estimate-fund-solution}
Assume $\widetilde{q}\in L^r(0,s)$ with $r\in[1,\infty]$. Then,
\begin{equation*}
\norm{\xi(w^2,\cdot)-\xi_\nu(w^2,\cdot)}_{L^2(0,s)}
	= e^{\abs{\im(w)}}\times
	\begin{cases}
		o(\abs{w}^{-\nu - \frac12})              & \text{if } r=1,
		\\[1mm]
		\cO(\abs{w}^{-\nu - \frac12 - \frac1p})      & \text{if } 1<r<\infty,
		\\[1mm]
		\cO(\abs{w}^{-\nu - \frac32}\log\abs{w}) & \text{if } r=\infty,
	  \end{cases}
\end{equation*}
as $w\to\infty$, where $p$ obeys $1/p + 1/r = 1$.
\end{lemma}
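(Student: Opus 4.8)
The plan is to estimate the $L^2(0,s)$-norm of the difference $\Delta\xi(w^2,x) \defeq \xi(w^2,x) - \xi_\nu(w^2,x)$ directly from the pointwise bound \eqref{eq:estimate-delta-xi}. That bound gives
\[
\abs{\Delta\xi(w^2,x)} \le C\left(\frac{x}{1+\abs{w}x}\right)^{\nu+\frac12}
	e^{\abs{\im(w)}x}\,G(w,x),\qquad
G(w,x)\defeq\int_0^x\frac{u\abs{q(u)}}{1+\abs{w}u}\,du,
\]
since $\sqrt{w^2}=\pm w$ and only $\abs{w}$, $\abs{\im(w)}$ enter. First I would pull out $e^{2\abs{\im(w)}}$ (using $x\le s$, absorbing the fixed factor $e^{2\abs{\im(w)}(s-x)}\le$ const into $C$ is not quite right, so instead bound $e^{\abs{\im(w)}x}\le e^{\abs{\im(w)}}$ after noting $s$ is fixed — if $s>1$ one keeps $e^{\abs{\im(w)}s}$ and rescales; I will just write the clean statement for $s$ absorbed). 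Then
\[
\norm{\Delta\xi(w^2,\cdot)}_{L^2(0,s)}^2
 \le C^2 e^{2\abs{\im(w)}}\int_0^s
 \left(\frac{x}{1+\abs{w}x}\right)^{2\nu+1} G(w,x)^2\,dx.
\]
Because $G(w,x)$ is nondecreasing in $x$, it is bounded by $G(w,s)$, so the whole integral is at most $G(w,s)^2\int_0^s\big(\tfrac{x}{1+\abs{w}x}\big)^{2\nu+1}dx$, and the $x$-integral is $\cO(\abs{w}^{-2\nu-2})$ as $\abs{w}\to\infty$ by the substitution $t=\abs{w}x$ (the integrand decays like $\abs{w}^{-2\nu-1}x^{-1}$ times a convergent tail, giving one extra power). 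Hence the core reduces to estimating $G(w,s)=\int_0^s\frac{u\abs{q(u)}}{1+\abs{w}u}\,du$ in terms of the integrability of $\widetilde q(u)=u\,q(u)$.

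The three cases of the lemma come from Hölder applied to $G(w,s)$ against the weight $(1+\abs{w}u)^{-1}$. For $r=\infty$: $G(w,s)\le\norm{\widetilde q}_{L^\infty}\int_0^s\frac{du}{1+\abs{w}u}=\norm{\widetilde q}_{L^\infty}\,\abs{w}^{-1}\log(1+\abs{w}s)=\cO(\abs{w}^{-1}\log\abs{w})$, which combined with the $\cO(\abs{w}^{-\nu-1})$ from the square root of the $x$-integral yields $\cO(\abs{w}^{-\nu-\frac32}\log\abs{w})$, as claimed. For $1<r<\infty$ with conjugate $p$: $G(w,s)\le\norm{\widetilde q}_{L^r}\big(\int_0^s(1+\abs{w}u)^{-p}du\big)^{1/p}$, and $\int_0^s(1+\abs{w}u)^{-p}du=\cO(\abs{w}^{-1})$ since $p>1$, giving $G(w,s)=\cO(\abs{w}^{-1/p})$ and total $\cO(\abs{w}^{-\nu-1}\cdot\abs{w}^{-1/p})=\cO(\abs{w}^{-\nu-\frac12-\frac1p})$ after taking the square root — wait, the square root of $\abs{w}^{-2\nu-2}$ is $\abs{w}^{-\nu-1}$, and times $\abs{w}^{-1/p}$ gives $\abs{w}^{-\nu-1-1/p}$; to match the stated exponent $-\nu-\frac12-\frac1p$ I will instead keep the sharper $x$-integral estimate. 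Actually the $x$-integral $\int_0^s\big(\tfrac{x}{1+\abs{w}x}\big)^{2\nu+1}dx$ behaves like $\abs{w}^{-2\nu-1}\int_0^{\abs{w}s}\big(\tfrac{t}{1+t}\big)^{2\nu+1}\tfrac{dt}{\abs{w}}$; here $\big(\tfrac{t}{1+t}\big)^{2\nu+1}\to1$, so $\int_0^{\abs{w}s}\cdots dt\sim\abs{w}s$ and the whole thing is $\cO(\abs{w}^{-2\nu-1})$, not $\abs{w}^{-2\nu-2}$. So the square root is $\cO(\abs{w}^{-\nu-\frac12})$, and the three cases follow by multiplying by $G(w,s)$: $o(1)$ when $r=1$ (dominated convergence: $G(w,s)\to0$ since $\widetilde q\in L^1$ and $(1+\abs{w}u)^{-1}\to0$ pointwise a.e.), $\cO(\abs{w}^{-1/p})$ when $1<r<\infty$, and $\cO(\abs{w}^{-1}\log\abs{w})$ when $r=\infty$, exactly reproducing the three lines of the statement.

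The only genuinely delicate point is the $r=1$ case, where one needs $G(w,s)=o(1)$ rather than merely bounded: this is the dominated convergence theorem applied to $u\mapsto\widetilde q(u)(1+\abs{w}u)^{-1}$, whose integrand is bounded by $\abs{\widetilde q(u)}\in L^1(0,s)$ and tends to $0$ for every $u>0$ as $\abs{w}\to\infty$. Everything else is the elementary scaling of $\int_0^s\big(\tfrac{x}{1+\abs{w}x}\big)^{2\nu+1}dx$ and Hölder's inequality; I would present the $x$-integral estimate as a one-line substitution lemma and then treat the three integrability regimes in turn. A minor bookkeeping nuisance is tracking the fixed-in-$s$ constants (the factor $e^{\abs{\im(w)}s}$ versus $e^{\abs{\im(w)}}$), which I would handle once at the outset by stating that all $\cO$-constants may depend on $\nu$, $q$ and $s$.
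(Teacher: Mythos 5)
Your argument is correct and is the standard (surely the intended) one: the paper itself gives no proof of this lemma, deferring to \cite[Lemma~4.1]{siltol2}, and the route there is exactly what you do --- integrate the pointwise bound \eqref{eq:estimate-delta-xi}, note that the inner integral is monotone in $x$ so it factors out as $G(w,s)$, compute $\int_0^s\bigl(\tfrac{x}{1+\abs{w}x}\bigr)^{2\nu+1}dx=\cO(\abs{w}^{-2\nu-1})$ by scaling (your self-corrected exponent is the right one), and then estimate $G(w,s)$ by dominated convergence, H\"older, or the logarithmic integral according to the three regimes of $r$. The only residual wrinkle, which you already flag, is that the computation naturally produces $e^{\abs{\im(w)}s}$ rather than $e^{\abs{\im(w)}}$; this is an imprecision in the statement as quoted (and immaterial where the lemma is used, namely at real $w=t_n$), not a defect of your proof.
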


\begin{remark}
\label{rem:refined-formula}
The proof of \cite[Lemma 2.2]{kostenko} leads to a more refined decomposition,
namely,
\begin{equation}
\label{eq:xi-refined}
\xi(z,x) = \xi_\nu(z,x) + \xi_{\nu,1}(z,x) + \Xi(z,x)
\end{equation}
for all $z\in\C$ and $x\in(0,s]$, where
\begin{equation}
\label{eq:asymptotic-formula-xi-refined}
\xi_{\nu,1}(z,x)
	= \xi_\nu(z,x)\int_0^x q(y)\theta_\nu(z,y)\xi_\nu(z,y)dy
		- \theta_\nu(z,x)\int_0^x q(y)\left(\xi_\nu(z,y)\right)^2 dy
\end{equation}
obeys
\begin{equation}
\label{eq:estimate-xi-one}
\abs{\xi_{\nu,1}(z,x)}
	\le C \left(\frac{x}{1+\abs{\sqrt{z}}x}\right)^{\nu+\frac12}
			e^{\abs{\im(\sqrt{z})}x}
			\int_0^x\frac{u\abs{q(u)}}{1+\abs{\sqrt{z}}u}du,
\end{equation}
and
\begin{equation}
\label{eq:res-term-refined}
\abs{\Xi(z,x)}
	\le C \left(\frac{x}{1+\abs{\sqrt{z}}x}\right)^{\nu+\frac12}
		e^{\abs{\im(\sqrt{z})}x}
		\left(\int_0^x\frac{u\abs{q(u)}}{1+\abs{\sqrt{z}}u}du\right)^2.
\end{equation}
\end{remark}

From \eqref{eq:res-term-refined} one immediately obtains the following estimate:
\begin{lemma}
\label{lem:some-tools}
Fix $\nu,s\in(0,\infty)$. Suppose $\widetilde{q}\in L^r(0,s)$ with $r\in(2,\infty]$.
Then, there exists positive constants $C$ and $\delta>0$ such that
\begin{equation*}
\norm{\Xi(t^2,\cdot)}_{L^2(0,s)}
	\le C t^{-\nu- \frac32 -\delta},
\end{equation*}
for all $t>0$.
\end{lemma}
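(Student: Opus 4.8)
The plan is to bound $\norm{\Xi(t^2,\cdot)}_{L^2(0,s)}$ by squaring the pointwise estimate \eqref{eq:res-term-refined}, integrating in $x$, and then controlling the resulting expression by splitting off a power of $t$ using the integrability hypothesis $\widetilde{q}\in L^r(0,s)$ with $r>2$. Specifically, from \eqref{eq:res-term-refined} with $z=t^2$ (so $\sqrt{z}=t$ and $\im(\sqrt{z})=0$), one has
\[
\abs{\Xi(t^2,x)}
	\le C \left(\frac{x}{1+tx}\right)^{\nu+\frac12}
		\left(\int_0^x\frac{u\abs{q(u)}}{1+tu}du\right)^2,
\]
so that
\[
\norm{\Xi(t^2,\cdot)}_{L^2(0,s)}^2
	\le C^2 \int_0^s \left(\frac{x}{1+tx}\right)^{2\nu+1}
		\left(\int_0^x\frac{u\abs{q(u)}}{1+tu}du\right)^4 dx.
\]
First I would dispose of the inner integral: since $\frac{1}{1+tu}\le 1$ and, crucially, $\frac{u}{1+tu}\le \frac{1}{t}$ for $u>0$, one can write $\frac{u}{1+tu}\le t^{-\theta}\left(\frac{u}{1+tu}\right)^{1-\theta}$ for any $\theta\in[0,1]$; applying this with a suitable $\theta$ and then Hölder's inequality with exponents $r$ and $p$ (where $1/p+1/r=1$) against $\widetilde q(u)=u q(u)$, one obtains
\[
\int_0^x\frac{u\abs{q(u)}}{1+tu}du
	\le t^{-\theta}\norm{\widetilde q}_{L^r(0,s)}
		\left(\int_0^x\left(\frac{1}{1+tu}\right)^{(1-\theta)p}du\right)^{1/p}.
\]
For $r>2$ we have $p<2$, hence one can choose $\theta$ so that $(1-\theta)p>1$, making the remaining $u$-integral convergent and bounded by a constant times $t^{-1/p}$; this gives a bound of the form $\int_0^x\frac{u\abs{q(u)}}{1+tu}du\le C\, t^{-\theta-1/p}$ with $\theta+1/p>1/2$ strictly, because $\theta>1/2-1/p$ is achievable precisely when $1/p<1$, i.e. $r>1$, and the strict gain over $1/2$ comes from $r>2$.

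Next I would insert this bound into the outer integral. The factor $\left(\int_0^x\frac{u\abs{q(u)}}{1+tu}du\right)^4$ is then controlled by $C\,t^{-4(\theta+1/p)}$, which is already much smaller than the $t^{-\nu-3/2-\delta}$ we want once squared back; so it suffices to check that $\int_0^s \left(\frac{x}{1+tx}\right)^{2\nu+1}dx = \cO(t^{-2\nu-1})$ (in fact $\cO(t^{-\min(2\nu+1,2\nu+2)})$ after the substitution $y=tx$), and combine exponents. Collecting everything yields $\norm{\Xi(t^2,\cdot)}_{L^2(0,s)}^2 \le C\, t^{-(2\nu+1)}\, t^{-4(\theta+1/p)} \le C\, t^{-(2\nu+3+2\delta)}$ for some $\delta>0$, provided $4(\theta+1/p)-2 > 2\delta$, i.e. $\theta+1/p>1/2$, which we arranged above. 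Taking square roots gives the claim. I would remark that a cruder route also works: bound the inner integral by $t^{-1}\norm{q}_{L^1}$ trivially when that is finite, but the $L^r$, $r>2$, hypothesis is what is uniformly available and what yields the needed strict exponent $\delta>0$.

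The main obstacle I anticipate is purely bookkeeping: tracking the exponent $\delta$ through the Hölder split and the change of variables, and verifying that the choice of interpolation parameter $\theta$ can be made simultaneously compatible with (i) convergence of the truncated integral $\int_0^\infty (1+tu)^{-(1-\theta)p}du$, which forces $(1-\theta)p>1$, and (ii) a net exponent strictly exceeding $\nu+3/2$ after the power of $t^{-1/p}$ is extracted. Since for $r>2$ one has $1<p<2$, both constraints leave a nonempty interval for $\theta$, and any interior choice produces a $\delta>0$ depending only on $r$ (and $\nu$); making this dependence explicit, or simply asserting existence, completes the argument. No delicate analysis is required beyond \eqref{eq:res-term-refined}, Hölder's inequality, and elementary estimates on $\int_0^s (x/(1+tx))^{2\nu+1}\,dx$.
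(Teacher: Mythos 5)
Your overall strategy --- square the pointwise bound \eqref{eq:res-term-refined} at $z=t^2$, estimate the inner integral by H\"older against $\widetilde q$, bound $\int_0^s(x/(1+tx))^{2\nu+1}dx$ by $s\,t^{-2\nu-1}$, and combine --- is exactly what the paper intends (it treats the lemma as an immediate consequence of \eqref{eq:res-term-refined}, and the same H\"older computation appears explicitly in Lemma~\ref{lem:norm-estimate-fund-solution} and in the proof of Lemma~\ref{lem:relief}). The final arithmetic also closes. However, your key displayed estimate for the inner integral is mis-derived. Writing $\frac{u\abss{q(u)}}{1+tu}\le t^{-\theta}\,u\abss{q(u)}\cdot u^{-\theta}(1+tu)^{-(1-\theta)}$ and applying H\"older against $\widetilde q(u)=uq(u)$ leaves the conjugate factor $u^{-\theta}(1+tu)^{-(1-\theta)}$, not $(1+tu)^{-(1-\theta)}$: you have silently dropped the weight $u^{-\theta p}$ from the conjugate integral. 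Restoring it, the substitution $v=tu$ gives $\bigl(\int_0^x u^{-\theta p}(1+tu)^{-(1-\theta)p}du\bigr)^{1/p}\le C\,t^{\theta-1/p}$ (valid when $\theta p<1$), so the prefactor $t^{-\theta}$ is exactly cancelled and you recover only $t^{-1/p}$. The improved rate $t^{-\theta-1/p}$ with a fixed $\theta>0$ is in fact false in general: for $\widetilde q(u)=u^{-\alpha}$ with $\alpha<1/r$ one computes $\int_0^s\frac{\widetilde q(u)}{1+tu}du\sim C\,t^{-(1-\alpha)}$, and $1-\alpha$ can be made arbitrarily close to $1/p$ from above, so no uniform extra gain $\theta$ exists. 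Your accompanying discussion of which $\theta$ are admissible, and your claim that the strict gain over $1/2$ ``comes from $r>2$'' via the choice of $\theta$, are therefore not correct as stated.

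Fortunately the conclusion survives, because the plain H\"older bound ($\theta=0$) already suffices and is where the hypothesis $r>2$ genuinely enters: $\int_0^x\frac{u\abss{q(u)}}{1+tu}\,du\le\norm{\widetilde q}_{L^r(0,s)}\bigl(\int_0^s(1+tu)^{-p}du\bigr)^{1/p}\le C\,t^{-1/p}$ with $1/p=1-1/r>1/2$ precisely because $r>2$. Then $\norm{\Xi(t^2,\cdot)}^2_{L^2(0,s)}\le C\,t^{-2\nu-1}\,t^{-4/p}$, giving $\delta=2/p-1=1-2/r>0$. So the repair is simply to delete the interpolation. Two loose ends you should still address: for $r=\infty$ one has $p=1$ and the conjugate integral produces $t^{-1}\log(1+ts)$ rather than $C\,t^{-1}$, which is harmless since any $\delta<1$ then works; and the claim ``for all $t>0$'' (rather than for large $t$) follows trivially because \eqref{eq:res-term-refined} keeps $\norm{\Xi(t^2,\cdot)}_{L^2(0,s)}$ bounded for $t$ in compact sets while the right-hand side blows up as $t\to 0^+$.
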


From \eqref{eq:estimate-delta-xi} and \eqref{eq:estimate-delta-xi-prime},
it follows that $\xi(z,x)$ also obeys boundary condition 
\eqref{eq:boundary-left-condition-limit-circle} whenever $\nu\in(0,1)$.
This in turn implies
$\xi(z,\cdot) \in \ker(H_s^* - z I)$.
Hence, in view of \eqref{eq:domain-selfadjoint-extensions}, the spectrum of 
$H_{s,\gamma}$ is given by
\begin{equation}
\label{eq:spectrum-selfadjoint-operators}
\sigma(H_{s,\gamma})
	= \{\lambda\in\R : \xi(\lambda,s)\cos\gamma + \xi'(\lambda,s)\sin\gamma=0\}.
\end{equation}
In particular, if $\lambda\in\sigma(H_{s,\gamma})$, then $\xi(\lambda,\cdot)$ 
is the corresponding eigenfunction (up to normalization).


Arrange the elements of $\sigma(H_{s,\gamma})$ according to 
increasing values. Let us denote (and enumerate) them as follows,
\begin{equation*}
\sigma(H_{s,\gamma}) 
	= \begin{cases}
		\{t_n^2\}_{n=1}^\infty & \text{if }\gamma=0,
		\\[2mm]
		\{t_n^2\}_{n=0}^\infty & \text{if }\gamma\neq 0
	  \end{cases}
\end{equation*}
(the finitely many negative eigenvalues have imaginary values of $t_n$). 
According to \cite[Thm.~2.5]{kostenko},
\begin{equation}
\label{eq:asymptotic-eigenvalues}
t_n = \begin{cases}
		\left(n+\frac{2\nu-1}{4}\right)\frac{\pi}{s} 
				+ \epsilon_n + \cO(n^{-1})        & \text{if }\gamma=0,
		\\[2mm]
		\left(n+\frac{2\nu+1}{4}\right)\frac{\pi}{s} 
				+ \epsilon_n^\gamma + \cO(n^{-1}) & \text{if }\gamma\neq 0,
	  \end{cases}
	\quad n\to\infty,
\end{equation}
where 
\begin{equation*}
\epsilon_n        = \cO\left(\int_0^s\frac{x\abs{q(x)}}{s+n\pi x}dx\right),
\quad
\epsilon_n^\gamma = \cO\left(\int_0^s\frac{x\abs{q(x)}}{s+n\pi x}dx\right),
\quad n\to\infty.
\end{equation*}
Assuming $\widetilde{q}\in L^r(0,s)$ with $r\in[1,\infty]$, it follows that
\begin{equation}
\label{eq:asym-residual-term}
\int_0^s\frac{x\abs{q(x)}}{s+n\pi x}dx 
	=\begin{cases}
		o(1)               & \text{if } r = 1,
		\\
		\cO(n^{-1+\frac1r})    & \text{if } 1<r<\infty,
		\\
		\cO(n^{-1}\log(n)) & \text{if } r = \infty,
	 \end{cases}
\end{equation}
as $n\to\infty$.

\begin{lemma}
\label{lem:denominator}
Suppose $\widetilde{q}\in L^1(0,s)$. Then, there exist $0<C<D<\infty$ and 
$n_0\in\N$ such that
\[
C n^{-\nu-\frac12} 
	\le \norm{\xi(t_n^2,\cdot)}_{L^2(0,s)}
	\le D n^{-\nu-\frac12}
\]
for all $n\ge n_0$.
\end{lemma}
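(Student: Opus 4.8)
The plan is to compare the perturbed eigenfunction $\xi(t_n^2,\cdot)$ with the explicit unperturbed solution $\xi_\nu(t_n^2,\cdot)$ and to show that the difference is of strictly lower order. First I would dispose of the finitely many negative eigenvalues, so that for $n$ large enough $t_n$ is real and positive; by \eqref{eq:asymptotic-eigenvalues} together with \eqref{eq:asym-residual-term} for $r=1$ (which gives $\epsilon_n,\epsilon_n^\gamma=o(1)$), one has $t_n = \tfrac{\pi n}{s}\,(1+o(1))$, hence $t_n^{-\nu-\frac12}\asymp n^{-\nu-\frac12}$. Thus it suffices to establish the two-sided bound with $n^{-\nu-\frac12}$ replaced by $t_n^{-\nu-\frac12}$, and in fact it suffices to prove a single asymptotic equivalence $\norm{\xi(t_n^2,\cdot)}_{L^2(0,s)} = c\,t_n^{-\nu-\frac12}(1+o(1))$ with $c>0$; the constants $C<D$ and the index $n_0$ then follow by elementary squeezing.

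Next I would invoke Lemma~\ref{lem:norm-estimate-fund-solution} with $r=1$, which is legitimate since $\widetilde q\in L^1(0,s)$; because $t_n$ is real the exponential factor $e^{\abs{\im(t_n)}}$ equals $1$, so the triangle inequality in $L^2(0,s)$ gives
\begin{equation*}
\bigl|\,\norm{\xi(t_n^2,\cdot)}_{L^2(0,s)} - \norm{\xi_\nu(t_n^2,\cdot)}_{L^2(0,s)}\,\bigr| \le \norm{\xi(t_n^2,\cdot)-\xi_\nu(t_n^2,\cdot)}_{L^2(0,s)} = o\bigl(t_n^{-\nu-\frac12}\bigr),\qquad n\to\infty .
\end{equation*}
Hence everything reduces to the unperturbed norm, for which I would use the explicit formula \eqref{eq:fundamental-solution-unperturbed-good}: with $z=t_n^2$ and $\sqrt z=t_n$, and the change of variable $u=t_n x$,
\begin{equation*}
\norm{\xi_\nu(t_n^2,\cdot)}^2_{L^2(0,s)} = \frac{\pi}{2}\,t_n^{-2\nu}\int_0^s x\,J_\nu(t_n x)^2\,dx = \frac{\pi}{2}\,t_n^{-2\nu-2}\int_0^{t_n s} u\,J_\nu(u)^2\,du .
\end{equation*}

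To evaluate $\int_0^R u\,J_\nu(u)^2\,du$ I would use the classical Lommel identity $\int_0^R u\,J_\nu(u)^2\,du = \tfrac{R^2}{2}\bigl(J_\nu'(R)^2 + (1-\nu^2 R^{-2})J_\nu(R)^2\bigr)$ together with \eqref{eq:asymptotic-bessel-j} and the companion expansion $J_\nu'(R) = \sqrt{2/(\pi R)}\bigl(-\sin(R-\tfrac{\nu\pi}{2}-\tfrac{\pi}{4}) + \cO(R^{-1})\bigr)$ on the real axis (obtained from $J_\nu' = J_{\nu-1}-\tfrac{\nu}{x}J_\nu$). These yield $J_\nu(R)^2+J_\nu'(R)^2 = \tfrac{2}{\pi R}\bigl(1+\cO(R^{-1})\bigr)$, hence $\int_0^R u\,J_\nu(u)^2\,du = \tfrac{R}{\pi}+\cO(1)$. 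Taking $R=t_n s\to\infty$ gives $\norm{\xi_\nu(t_n^2,\cdot)}^2_{L^2(0,s)} = \tfrac{s}{2}\,t_n^{-2\nu-1}+\cO(t_n^{-2\nu-2})$, i.e. $\norm{\xi_\nu(t_n^2,\cdot)}_{L^2(0,s)} = \sqrt{s/2}\,t_n^{-\nu-\frac12}(1+o(1))$. Combining this with the displayed perturbation estimate and with $t_n\asymp n$ finishes the proof.

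\textbf{Main obstacle.} There is no deep difficulty here; the argument is essentially bookkeeping. The two points that need mild care are: (i) confirming that Lemma~\ref{lem:norm-estimate-fund-solution} with $r=1$ genuinely delivers a remainder of order $o(t_n^{-\nu-\frac12})$, strictly below the main term $\asymp t_n^{-\nu-\frac12}$, so that the two-sided bound persists for all $n\ge n_0$; and (ii) carrying out the Lommel-plus-asymptotics step cleanly. One could avoid Lommel's identity by splitting $(0,s)$ at a point of size $\sim 1/t_n$ and using $J_\nu(u)\asymp u^\nu$ near $0$ together with $\abs{J_\nu(u)}\le C u^{-1/2}$ for $u\gtrsim 1$, but this would only recover the order of $\norm{\xi_\nu(t_n^2,\cdot)}$, not the constant, which is all that is actually needed for the lemma.
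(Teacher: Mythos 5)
Your proposal is correct and follows essentially the same route as the paper: reduce to the unperturbed norm via the reverse triangle inequality and Lemma~\ref{lem:norm-estimate-fund-solution} with $r=1$ (whose remainder is indeed $o(t_n^{-\nu-\frac12})$ for real $t_n$), evaluate $\norm{\xi_\nu(t_n^2,\cdot)}_{L^2(0,s)}$ exactly by a Lommel identity, and conclude with the Bessel and eigenvalue asymptotics. The only cosmetic difference is that the paper uses the cross-product form $\tfrac{s^2}{2}\left(J_\nu(t_ns)^2 - J_{\nu-1}(t_ns)J_{\nu+1}(t_ns)\right)$ of Lommel's identity instead of your $(J_\nu')^2$ form; both give the same leading constant $\sqrt{s/2}$.
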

\begin{proof}
We have the obvious inequalities
\[
\abs{\norm{\xi_\nu(t_n^2)}-\norm{\xi(t_n^2)-\xi_\nu(t_n^2)}}
	\le \norm{\xi(t_n^2)}
	\le \norm{\xi_\nu(t_n^2)}+\norm{\xi(t_n^2)-\xi_\nu(t_n^2)}
\]
In what follows we assume $n$ so large that $t_n^2$ is positive. Then, on one 
hand,
\begin{align*}
\norm{\xi_\nu(t_n^2)}^2
	&= \frac{\pi}{2} t_n^{-2\nu}\int_0^s x\left(J_\nu(t_n x)\right)^2 dx
	\\
	&= \frac{\pi}{4} s^2 t_n^{-2\nu}
		\left(\left(J_\nu(t_ns)\right)^2 - J_{\nu-1}(t_ns)J_{\nu+1}(t_ns)\right).
\end{align*}
Combining \eqref{eq:asymptotic-bessel-j}, 
\eqref{eq:asymptotic-eigenvalues} and \eqref{eq:asym-residual-term}, one can see
that
\begin{equation*}
\left(J_\nu(t_ns)\right)^2 - J_{\nu-1}(t_ns)J_{\nu+1}(t_ns)
	= \frac{2}{\pi s t_n} (1 + o(1)),\quad n\to\infty.
\end{equation*}
That is,
\begin{equation}
\label{eq:boredom-1}
\norm{\xi_\nu(t_n^2)} = \sqrt{\frac{s}{2}} t_n^{-\nu-\frac12}(1 + o(1)),\quad 
n\to\infty.
\end{equation}
On the other hand, Lemma~\ref{lem:norm-estimate-fund-solution} implies
\begin{equation}
\label{eq:boredom-2}
\norm{\xi(t_n^2)-\xi_\nu(t_n^2)}
	= o\left(t_n^{-\nu - \frac12}\right).
\end{equation}
Finally, the assertion follows after combining 
\eqref{eq:asymptotic-eigenvalues}, 
\eqref{eq:boredom-1} and \eqref{eq:boredom-2}.
\end{proof}

Associated with the symmetric operator $H_{s}$, one has the de Branges space
\begin{equation}
\label{eq:associated-dB-space}
\cB_{s} 
	:= \left\{F(z)=\int_0^s \xi(z,x)\varphi(x)dx : \varphi \in 
	L^2(0,s)\right\},\quad
\norm{F}^2_{\cB_{s}} 
		= \int_0^s \abs{\varphi(x)}^2dx.
\end{equation}
The corresponding reproducing kernel is 
\[
K_s(z,w)=\inner{\xi(\cc{z},\cdot)}{\xi(\cc{w},\cdot)}_{L^2(0,s)}.
\]

\begin{theorem}[cf. {\cite[Thm.~4.2]{siltol2}}]
\label{thm:invariance-spaces}
Fix $\nu,s\in(0,\infty)$.
Assume $\widetilde{q}\in L^r(0,s)$ with $r\in(2,\infty]$. 
Then $\cB_{s}=\cB_{\nu,s}$ setwise. Consequently, $\assoc_{N(\nu)}\cB_s$
contains a zero-free real entire function but no such a function lies
in $\assoc_{k}\cB_s$ for any $0\le k< N(\nu)$.
\end{theorem}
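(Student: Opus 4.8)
The plan is to follow the strategy of \cite[Thm.~4.2]{siltol2}, adapting its perturbative argument to the present setting: show that the generalized Fourier transform $F(z)=\int_0^s\xi(z,x)\varphi(x)\,dx$ defining $\cB_s$ and the corresponding transform for the unperturbed problem defining $\cB_{\nu,s}$ have the same range as subsets of the entire functions, and that the difference operator between the two transforms is, in a suitable sense, of lower order. The key quantitative input is the decomposition $\xi(z,x)=\xi_\nu(z,x)+\big(\xi(z,x)-\xi_\nu(z,x)\big)$ together with the uniform bound \eqref{eq:estimate-delta-xi} (and its $L^2$-consequence, Lemma~\ref{lem:norm-estimate-fund-solution}), which under $\widetilde q\in L^r(0,s)$, $r\in(2,\infty]$, gives a genuine gain of a power of $|w|$ over the leading term.

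First I would introduce the two isometries $\Phi_\nu,\Phi:L^2(0,s)\to\mathrm{Ent}$, $\Phi_\nu\varphi=\int_0^s\xi_\nu(z,\cdot)\varphi$, $\Phi\varphi=\int_0^s\xi(z,\cdot)\varphi$, so that $\cB_{\nu,s}=\ran\Phi_\nu$ and $\cB_s=\ran\Phi$, each carrying the $L^2(0,s)$ norm. Writing $\Phi=\Phi_\nu+R$ with $(R\varphi)(z)=\int_0^s(\xi(z,x)-\xi_\nu(z,x))\varphi(x)\,dx$, the goal reduces to showing $\ran\Phi=\ran\Phi_\nu$ as sets of entire functions, equivalently that the operator $U:=\Phi_\nu^{-1}\Phi:L^2(0,s)\to L^2(0,s)$ (well defined and bounded since both are isometries onto their ranges, once one checks $\ran\Phi\subseteq\ran\Phi_\nu$ and vice versa) is a bounded bijection. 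Concretely, $U=I+\Phi_\nu^{-1}R$, and I would prove that $\Phi_\nu^{-1}R$ — or rather the self-adjoint/compact perturbation encoding the change of transform — is compact with no eigenvalue $-1$, or better, follow \cite{siltol2} in identifying $U$ with a Volterra-type (triangular) perturbation of the identity so that invertibility is automatic. The triangular structure comes from the fact that $\xi(z,x)-\xi_\nu(z,x)$ is built from an integral over $(0,x)$ (see the explicit formula \eqref{eq:asymptotic-formula-xi-refined} in Remark~\ref{rem:refined-formula}), so $R$ maps $L^2(0,s)$ into functions whose "de Branges profile" at scale $s$ only involves mass on $(0,s)$ — this is exactly the mechanism that forces $\cB_s$ and $\cB_{\nu,s}$ to coincide as sets rather than merely to be comparable.

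The main obstacle will be making the set equality $\ran\Phi=\ran\Phi_\nu$ precise: one must show both that every $F=\Phi\varphi$ lies in $\cB_{\nu,s}$ (i.e.\ $F=\Phi_\nu\psi$ for some $\psi\in L^2(0,s)$) and conversely. For the forward inclusion I would use the growth bounds \eqref{eq:another-estimate} to control $|F(\lambda)/E_{\nu,s}(\lambda)|$ on $\R$ and the Hermite–Biehler mean-type comparison (both $E_s$ and $E_{\nu,s}$ have exponential type $s$ and the same mean type along the imaginary axis, by \eqref{eq:estimate-delta-xi} applied at $x=s$), concluding $F\in\cB_{\nu,s}$ via the standard de Branges characterization; the key point is that \eqref{eq:estimate-delta-xi} gives $|E_s(z)-E_{\nu,s}(z)|$ small relative to $|E_{\nu,s}(z)|$ as $|z|\to\infty$ in $\overline{\C_+}$, so the two spaces are even isometrically comparable in both directions and hence equal. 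For the reverse inclusion one runs the same argument after noting that $\xi_\nu$ is likewise obtained from $\xi$ by a perturbation of the same (negative) $q$, or directly inverts the triangular operator $U$. Once $\cB_s=\cB_{\nu,s}$ setwise is established, the final sentence is immediate: $\assoc_k\cB_s=\assoc_k\cB_{\nu,s}$ for every $k$ by definition of $\assoc_k$, so the existence/non-existence of a zero-free real entire function in $\assoc_{N(\nu)}$ but not in $\assoc_k$ for $k<N(\nu)$ transfers verbatim from Theorem~\ref{thm:asocc-N}.
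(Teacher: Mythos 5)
First, a point of comparison that matters here: the paper does not prove Theorem~\ref{thm:invariance-spaces} at all. It is imported verbatim from \cite[Thm.~4.2]{siltol2} (note the ``cf.''~in the theorem header and the absence of any proof environment), so there is no in-paper argument to measure your proposal against; what follows is an assessment of the proposal on its own terms. The part you do nail is the final sentence: since $\assoc_k\cB=\cB+z\cB+\cdots+z^k\cB$ depends only on $\cB$ as a set, the statement about zero-free functions transfers verbatim from Theorem~\ref{thm:asocc-N} once setwise equality is known. That reduction is correct and is exactly how the paper intends it.

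For the core claim $\cB_s=\cB_{\nu,s}$, however, your sketch has genuine gaps. (i) The assertion that the two spaces are ``isometrically comparable in both directions and hence equal'' overreaches: the theorem claims only setwise equality, the reproducing kernels $\inner{\xi(\cc z)}{\xi(\cc w)}$ and $\inner{\xi_\nu(\cc z)}{\xi_\nu(\cc w)}$ genuinely differ, and de Branges' ordering theorem (Thm.~35) applies to isometric inclusions, so it cannot be invoked here. (ii) You offer three alternative mechanisms (compactness of $\Phi_\nu^{-1}R$, a Volterra/triangular inverse, a Hermite--Biehler mean-type comparison) without carrying any of them out, and the definition of $U=\Phi_\nu^{-1}\Phi$ presupposes the range inclusion that is to be proved. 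The ``triangular'' claim in particular is unsupported: \eqref{eq:asymptotic-formula-xi-refined} is not of the transmutation form $\int_0^x B(x,y)\,\xi_\nu(z,y)\,dy$ with a $z$-independent kernel, and the existence of a bounded invertible transformation operator for singular Bessel operators is itself a delicate matter. (iii) Most tellingly, none of your mechanisms uses $r>2$, whereas the paper stresses that $r>2$ is the essential technical threshold; if a bounded invertible Volterra perturbation of the identity existed under $\widetilde q\in L^1$, the theorem would already hold for $r=1$. The role of $r>2$ is visible in Lemma~\ref{lem:norm-estimate-fund-solution}: it yields a \emph{relative} error $\norm{\xi(w^2)-\xi_\nu(w^2)}/\norm{\xi_\nu(w^2)}=\cO(\abss{w}^{-1/p})$ with $1/p=1-1/r>1/2$, which is square-summable along the spectrum $t_n\sim n\pi/s$ of any $H_{s,\gamma}$ by \eqref{eq:asymptotic-eigenvalues}; this quadratic closeness of the two families of (normalized) eigenfunctions is the engine of the cited proof, and your sketch never engages with it. To make the argument complete you would need to state and use a criterion converting this quantitative closeness into setwise equality of the ranges of $\Phi$ and $\Phi_\nu$ (as is done in \cite{siltol2}), rather than gesture at several incompatible functional-analytic devices.
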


\begin{remark}
\label{rem:unitary-invariants}
Let $\Phi: L^2(0,s)\to\cB_{s}$ be the unitary operator
defined by the rule $F(z)=(\Phi\varphi)(z)$.
Then $S_{s} = \Phi H_{s}\Phi^{-1}$, where $S_{s}$ is 
the operator of multiplication by the independent variable in $\cB_{s}$, 
Moreover, the corresponding selfadjoint extensions are analogously related,
viz., $S_{s,\gamma} = \Phi H_{s,\gamma}\Phi^{-1}$. 
Thus, when referring to unitary invariants (such as the spectrum),
we use interchangeably either $S_{s,\gamma}$ or $H_{s,\gamma}$ throughout this text.
\end{remark}

\section{Oversampling}
\label{sec:oversampling}


As mentioned in the Introduction, a discussion of the oversampling 
property in de Branges spaces involves certain weighted $\ell_p$ spaces.
For the class of de Branges spaces treated in this paper, we need the following
ones: Given $1\le p \le \infty$, define
\begin{equation}
\label{eq:ell-simple}
\ell_p(\nu)
	\defeq\left\{\{\beta_n\}_{n\in\N}\subset\C:
		\{\beta_n n^{-\nu-\frac12}\}_{n\in\N}\in\ell_p\right\},
\end{equation}
and
\begin{equation*}
\ell_{p}(\nu,s,q,\gamma)\defeq
\left\{ \{\beta_n\}_{n\in\N}\subset\C 
	: \left\{\beta_n K_s(\lambda_n,\lambda_n)^{-\frac12}\right\}_{n\in\N}\in\ell_p
\right\},
\end{equation*}
where $\{\lambda_n\}$ is the (ordered) spectrum of $H_{s,\gamma}$.

\begin{corollary}
\label{cor:spaces-lp}
Fix $\nu,s\in(0,\infty)$ and $\gamma\in[0,\pi)$. Suppose 
$\widetilde{q}\in L^1(0,s)$. Then, $\ell_{p}(\nu,s,q,\gamma)=\ell_{p}(\nu)$ 
setwise. 
\end{corollary}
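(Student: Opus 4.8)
The plan is to reduce the equivalence of the two weighted sequence spaces to the two-sided bound on the diagonal reproducing kernel obtained in Lemma~\ref{lem:denominator}. Recall that by definition $\{\beta_n\}\in\ell_p(\nu,s,q,\gamma)$ iff $\{\beta_n K_s(\lambda_n,\lambda_n)^{-1/2}\}\in\ell_p$, whereas $\{\beta_n\}\in\ell_p(\nu)$ iff $\{\beta_n n^{-\nu-1/2}\}\in\ell_p$. Since multiplying a sequence entrywise by a sequence that is bounded above and below by positive constants preserves membership in $\ell_p$ for every $p\in[1,\infty]$, it suffices to show that the ratio $n^{-\nu-1/2}\big/K_s(\lambda_n,\lambda_n)^{-1/2}=K_s(\lambda_n,\lambda_n)^{1/2}\,n^{-\nu-1/2}$ is bounded above and below by positive constants for all $n$ sufficiently large (the finitely many remaining indices are harmless, as altering finitely many entries of a sequence never affects its $\ell_p$ membership).

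First I would record that $K_s(\lambda_n,\lambda_n)=\inner{\xi(\lambda_n,\cdot)}{\xi(\lambda_n,\cdot)}_{L^2(0,s)}=\norm{\xi(\lambda_n,\cdot)}_{L^2(0,s)}^2$, which is immediate from the formula $K_s(z,w)=\inner{\xi(\cc z,\cdot)}{\xi(\cc w,\cdot)}_{L^2(0,s)}$ together with the fact that $\lambda_n\in\R$ (the finitely many negative eigenvalues are still real, so this identity is valid for every $n$; the purely imaginary $t_n$ enter only through $\lambda_n=t_n^2\in\R$). Writing $\lambda_n=t_n^2$ in the notation of Lemma~\ref{lem:denominator}, that lemma gives constants $0<C<D<\infty$ and $n_0\in\N$ with
\[
C n^{-\nu-\frac12}\le \norm{\xi(t_n^2,\cdot)}_{L^2(0,s)}\le D n^{-\nu-\frac12}
\]
for all $n\ge n_0$, hence
\[
C^2 n^{-2\nu-1}\le K_s(\lambda_n,\lambda_n)\le D^2 n^{-2\nu-1},\qquad n\ge n_0,
\]
so $C\le K_s(\lambda_n,\lambda_n)^{1/2}n^{\nu+1/2}\le D$ for $n\ge n_0$. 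This is exactly the two-sided control on the ratio needed above. Note that Lemma~\ref{lem:denominator} requires only $\widetilde q\in L^1(0,s)$, which matches the hypothesis of the corollary, so no additional assumption is needed.

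There is essentially no obstacle here; the only point requiring a word of care is that the indexing of $\sigma(H_{s,\gamma})$ starts at $n=1$ when $\gamma=0$ and at $n=0$ when $\gamma\ne0$, but this merely shifts the index by a bounded amount and is absorbed into the "finitely many indices" remark and into the constants. I would therefore conclude: the diagonal-to-diagonal reweighting $\{\beta_n K_s(\lambda_n,\lambda_n)^{-1/2}\}\leftrightarrow\{\beta_n n^{-\nu-1/2}\}$ differs, for $n\ge n_0$, by multiplication by the bounded-above-and-below sequence $K_s(\lambda_n,\lambda_n)^{1/2}n^{\nu+1/2}$, and for $n<n_0$ by finitely many nonzero factors; both operations preserve $\ell_p$ for every $p\in[1,\infty]$. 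Hence $\ell_p(\nu,s,q,\gamma)=\ell_p(\nu)$ setwise, as claimed. $\qed$
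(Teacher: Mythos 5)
Your proof is correct and follows exactly the route the paper intends: the paper's entire proof is ``Use Lemma~\ref{lem:denominator}'', and you have simply spelled out the standard reduction (two-sided bound on $K_s(\lambda_n,\lambda_n)^{1/2}n^{\nu+1/2}$ for large $n$, finitely many harmless exceptional indices). Nothing is missing.
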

\begin{proof}
Use Lemma~\ref{lem:denominator}.
\end{proof}

For every $\varphi\in L^2(0,s)$, one clearly has
\begin{equation}
\label{eq:decomposition-function-L2}
\varphi(x) 
	= \sum_{\lambda_n\in\sigma(H_{s,\gamma})}
		\frac{1}{K_s(\lambda_n,\lambda_n)}\inner{\xi(\lambda_n,\cdot)}
		{\varphi(\cdot)}_{L^2(0,s)}
		\xi(\lambda_n,x),\quad \text{a.e.\ } x\in(0,s],
\end{equation}
where the convergence takes place with respect to the $L^2$-norm.
Hence the sampling formula
\begin{equation}
\label{1ra formula kramer}
F(z)=\sum_{\lambda_n\in\sigma(S_{s,\gamma})}\frac{K_s(z,\lambda_n)}
		{K_s(\lambda_n,\lambda_n)}\, F(\lambda_n),
		\quad F\in\cB_s,
\end{equation}
holds true, where the convergence is with respect to the $\cB_s$-norm, 
which in turn implies uniform convergence in compact subsets of $\C$ 
\cite[Prop. 1]{siltol1}.
Since
\begin{equation}
\label{eq:using-parseval}
\norm{F}^2_{\cB_s}=\sum_{\lambda_n\in\sigma(S_{s,\gamma})} 
\frac{\abs{F(\lambda_n)}^2}{K_s(\lambda_n,\lambda_n)},
\quad F \in \cB_s,
\end{equation}
and taking into account Corollary~\ref{cor:spaces-lp},
the sequence $\{F(\lambda_n) : \lambda_n\in\sigma(S_{s,\gamma})\}$
belongs to ${\ell}_2(\nu)$. Clearly, if one substitutes
$F(\lambda_n)$ by $F(\lambda_n)+\delta_n$ with 
$\delta=\{\delta_n\}\in \ell_2(\nu)$, then
\eqref{1ra formula kramer} produces an approximation $F_\delta\in\cB_s$
that satisfies 
\[
\abs{F_\delta(z)-F(z)}\le C(\K)\norm{\delta}_{\ell_2(\nu)}
\] 
uniformly for $z$ in any given compact subset $\K\subset\C$.

Fix $0<a<b<\infty$.
Any $\varphi\in L^2(0,a)$ can be regarded as an element of $L^2(0,b)$ since
$\varphi = \varphi \chi_{(0,a]} + 0 \chi_{(a,b]}$, where $\chi_E$ denotes 
the characteristic function of a set $E$. Define
\begin{equation}
\label{eq:function-R}
\cR_{ab}(x) \defeq  \chi_{(0,a]}(x) + \frac{b-x}{b-a}\chi_{(a,b]}(x).
\end{equation}
In this way, $\varphi=\varphi\cR_{ab}$ for all $\varphi\in L^2(0,a)$. 
Hence, using \eqref{eq:decomposition-function-L2} with $s=b$,
\begin{equation}     
\label{eq:f-product-R}
\varphi(x) 
	= \sum_{\lambda_n \in 
	\sigma(H_{b,\gamma})}\frac{1}{K_b(\lambda_n,\lambda_n)}
	\inner{\xi(\lambda_n,\cdot)}
	{\varphi(\cdot)}_{L_2(0,b)}\cR_{ab}(x)\xi(\lambda_n,x),
	\quad \text{a.e.\ } x\in(0,b],
\end{equation}
where the series converges with respect to the norm of $L^2(0,b)$.
Consider
\begin{equation*}
F(z)=\inner{\xi(\cc{z},\cdot)}{\varphi(\cdot)}_{L^2(0,b)},
\quad z \in \C.
\end{equation*}
Plugging \eqref{eq:f-product-R} in the previous equation, we 
arrive at
\begin{equation}     \label{eq:F-with-modified-kernel}
F(z) = \sum_{\lambda_n\in \sigma(S_{b,\gamma})}
		\frac{1}{K_b(\lambda_n,\lambda_n)}
		\inner{\xi(\cc{z},\cdot)}{\cR_{ab}(\cdot)\xi(\lambda_n,\cdot)}_{L^2(0,b)}
		F(\lambda_n),
\end{equation}
which converges uniformly in compact subsets of $\C$.

Define
\begin{equation}
\label{eq:function-Jab}
J_{ab}(z,w)\defeq
	\inner{\xi(\cc{z},\cdot)}{\cR_{ab}(\cdot)\xi(\cc{w},\cdot)}_{L^2(0,b)}.
\end{equation}
Note that $J_{ab}(\cdot,w)\in\cB_b$ for every $w\in\C$, and 
$J_{ab}(w,z)=\cc{J_{ab}(z,w)}$. 
We will prove that the spaces $\cB_s$ satisfy the following condition:

\begin{enumerate}[label={\bf(sc\arabic*)},ref={(sc\arabic*)},series=suf_cond,leftmargin=*]
\item \label{it:condition-oversampling}
	\label{hyp:convergence-oversampling} Given $0<a<b$ and any selfadjoint
	extension $S_{b,\gamma}$ of $S_b$, the series
	\begin{equation*}
	\sum_{\lambda_n \in \sigma(S_{b,\gamma})}
	\frac{\abs{J_{ab}(z,\lambda_n)}}{\sqrt{K_b(\lambda_n,\lambda_n)}}
	\end{equation*}
	converges uniformly in compact subsets of $\C$.
\end{enumerate}

\begin{proposition}
\label{prop:condition-oversampling-general}
Fix $\nu,b\in(0,\infty)$, $a\in(0,b)$, and $\gamma\in[0,\pi)$.
Suppose $q\in\text{AC}_\text{loc}(0,b]$ such that
$\widetilde{q}\in L^r(0,b)$ with $r\in(2,\infty]$.
Then, $\cB_{b}$ satisfies \ref{hyp:convergence-oversampling}.
%
\end{proposition}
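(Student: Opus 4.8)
The plan is to estimate $\abs{J_{ab}(z,\lambda_n)}/\sqrt{K_b(\lambda_n,\lambda_n)}$ term by term and show the bound is summable with uniform control in $z$ over a compact $\K\subset\C$. First I would recall from Lemma~\ref{lem:denominator} (via Corollary~\ref{cor:spaces-lp}) that $\sqrt{K_b(\lambda_n,\lambda_n)}=\norm{\xi(t_n^2,\cdot)}_{L^2(0,b)}\ge C n^{-\nu-\frac12}$ for $n$ large, so the denominator contributes a factor comparable to $n^{\nu+\frac12}$. The crux is therefore to show that the numerator $\abs{J_{ab}(z,\lambda_n)}=\abs{\inner{\xi(\cc{z},\cdot)}{\cR_{ab}(\cdot)\xi(\lambda_n,\cdot)}_{L^2(0,b)}}$ decays like $n^{-\nu-\frac12-\delta}$ for some $\delta>0$, uniformly for $z\in\K$; then the series is dominated by $\sum n^{-1-\delta}<\infty$.

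The key point is that $\cR_{ab}$ is the continuous, piecewise-linear cutoff \eqref{eq:function-R}, and the extra decay comes from integrating by parts twice against the oscillatory eigenfunction $\xi(\lambda_n,\cdot)$ on the interval $(0,b]$, exploiting that $\xi(\cdot,x)$ solves the Bessel-type equation $\tau\xi=z\xi$. Concretely, I would use the refined decomposition of Remark~\ref{rem:refined-formula}, writing $\xi=\xi_\nu+\xi_{\nu,1}+\Xi$, and split $J_{ab}$ accordingly. The leading piece $\inner{\xi_\nu(\cc z,\cdot)}{\cR_{ab}\xi_\nu(\lambda_n,\cdot)}$ can be handled by the classical Bessel asymptotics \eqref{eq:asymptotic-bessel-j} together with the spectral asymptotics \eqref{eq:asymptotic-eigenvalues}: since $\cR_{ab}$ is Lipschitz with a jump in its derivative only at $x=a$ and $x=b$, integration by parts against $\frac{d}{dx}(\text{oscillatory factor})$ produces one power of $t_n^{-1}\sim n^{-1}$, and the boundary terms at $a,b$ are themselves $\cO(n^{-\nu-1})$ after using \eqref{eq:A1-kost-sak-tes-unperturb}–\eqref{eq:A2-kost-sak-tes-unperturb} and the fact that the jump of $\cR'_{ab}$ is finite; here the hypothesis $q\in\text{AC}_\text{loc}(0,b]$ is what lets the integration by parts go through cleanly, since it gives $\xi'(\cdot,x)$ enough regularity in $x$. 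The correction terms involving $\xi_{\nu,1}$ and $\Xi$ are then absorbed using the estimates \eqref{eq:estimate-xi-one}, \eqref{eq:res-term-refined}, \eqref{eq:another-estimate}–\eqref{eq:another-estimate-prime}, Lemma~\ref{lem:norm-estimate-fund-solution} and Lemma~\ref{lem:some-tools}, which already furnish $L^2$-bounds with exponents $-\nu-\frac12-\frac1p$ or $-\nu-\frac32-\delta$ on $\norm{\xi-\xi_\nu}$ and $\norm{\Xi}$; pairing these against the $L^2$-norm of $\cR_{ab}\xi_\nu(\cc z,\cdot)$, which is $\cO(n^{-\nu-\frac12})$ uniformly on $\K$ by the same Bessel bounds, gives numerator estimates of order $n^{-2\nu-1-\delta'}$, more than enough.

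The main obstacle I anticipate is carrying out the integration-by-parts argument for the leading term carefully enough to extract the genuine gain of a full power (or at least a power $1+\delta$, $\delta>0$) over the trivial bound, while keeping all constants uniform for $z$ ranging over the compact set $\K$; in particular one must track the $z$-dependence through the factor $e^{\abs{\im\sqrt{z}}b}$, which is bounded on $\K$, and through the derivative $\xi_\nu'(\cc z,\cdot)$, whose $L^2(0,b)$-norm grows only polynomially in $z$ and hence is bounded on $\K$. A secondary technical nuisance is the handling of the finitely many negative eigenvalues (where $t_n$ is imaginary), but these contribute only finitely many terms and so are irrelevant to convergence. Once the per-term bound $\abs{J_{ab}(z,\lambda_n)}/\sqrt{K_b(\lambda_n,\lambda_n)}\le C(\K)\,n^{-1-\delta}$ is established for $n\ge n_0$, uniform convergence on $\K$ follows from the Weierstrass $M$-test, which is exactly \ref{hyp:convergence-oversampling}.
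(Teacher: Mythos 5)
Your overall strategy---lower-bounding the denominator via Lemma~\ref{lem:denominator}, decomposing $\xi(t_n^2,\cdot)=\xi_\nu+\xi_{\nu,1}+\Xi$ as in Remark~\ref{rem:refined-formula}, and gaining decay in the numerator by integration by parts---is indeed the paper's, but your quantitative bookkeeping contains a decisive error. Since $\sqrt{K_b(\lambda_n,\lambda_n)}\ge C n^{-\nu-\frac12}$, a numerator bound of order $n^{-\nu-\frac12-\delta}$ yields terms of size $n^{-\delta}$, which is summable only when $\delta>1$; what is actually required is $\abs{J_{ab}(z,\lambda_n)}\le C n^{-\nu-\frac32-\delta}$ for some $\delta>0$. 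None of your stated per-term estimates reaches this threshold. For the leading term one must retain the full factor $t_n^{-2}$ produced by integrating by parts twice (equivalently, by the eigenvalue relation for $\xi_\nu(t^2,\cdot)$), after which the boundary contributions at $a$ and $b$ are of size $t_n^{-2}\,t_n^{-\nu-\frac12}=t_n^{-\nu-\frac52}$; your claimed $\cO(n^{-\nu-1})$, even if granted, would leave terms of order $n^{-\frac12}$ after dividing by the denominator, and the series would not converge.

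The more serious gap is your treatment of the middle correction term. The quantity $\norm{\cR_{ab}\,\xi_\nu(\cc z,\cdot)}_{L^2(0,b)}$ has no $n$-dependence at all ($z$ ranges over a fixed compact set), so it is $\cO(1)$ on $\K$, not $\cO(n^{-\nu-\frac12})$. Consequently Cauchy--Schwarz applied to $\inner{\xi(\cc z)}{\cR_{ab}\,\xi_{\nu,1}(t_n^2)}$ gives only $\cO(n^{-\nu-\frac12-\frac1p})$ (from \eqref{eq:estimate-xi-one} and $\widetilde q\in L^r$ with $1/p+1/r=1$); since $1/p\le 1$, after division by $\sqrt{K_b(\lambda_n,\lambda_n)}$ this leaves at best the non-summable $n^{-1}$. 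This is precisely why the paper integrates by parts against $\xi_{\nu,1}(t^2,\cdot)$ as well (Computation~\#3 of the Appendix, leading to \eqref{eq:almost-done}), extracting another factor $t^{-2}$ and arriving at $\cO(n^{-\nu-2})$; it is also where the hypothesis $q\in\text{AC}_\text{loc}(0,b]$ is genuinely needed---to make sense of $Q_i''$, which involves $q'$---rather than to regularize $\xi'(z,\cdot)$, which is absolutely continuous for any $q\in L^1_{\rm loc}$. Only the remainder $\Xi$ can be dispatched by plain Cauchy--Schwarz via Lemma~\ref{lem:some-tools}, and it is that term which dictates the final exponent $n^{-\nu-\frac32-\delta}$.
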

\begin{proof}
Denote $\sigma(H_{b,\gamma})=\{t_n^2\}$.
In view of Lemma~\ref{lem:denominator}, it suffices to show that, given a compact
subset $\K\subset\C$, there exist $n_0\in\N$, positive constant 
$C=C(\K,\nu,q,\gamma,a,b)$ and $\delta>0$ such that
\[
\abs{\inner{\xi(\cc{z})}{R_{ab}\,\xi(t_n^2)}_{L^2(0,b)}}\le C n^{-\nu-\frac32-\delta},
\]
for all $z\in\K$ and $n\ge n_0$. For the purpose of this proof
$\inner{\cdot}{\cdot}:=\inner{\cdot}{\cdot}_{L^2(0,b)}$. Resorting to 
\eqref{eq:xi-refined}, one can write
\begin{equation}
\label{eq:be-careful}
\inner{\xi(\cc{z})}{R\,\xi(t^2)} 
	= \inner{\xi(\cc{z})}{R\,\xi_{\nu}(t^2)}
		+ \inner{\xi(\cc{z})}{R\,\xi_{\nu,1}(t^2)} 
		+ \inner{\xi(\cc{z})}{R\,\Xi(t^2)}
\end{equation}
where we have abbreviated $R:=R_{ab}$ and $t:=t_n$.

An integration by parts (see \ref{app:first-computation}) reduces the 
first term in \eqref{eq:be-careful} to
\begin{align}
\inner{\xi({z})}{R\,\xi_\nu(t^2)}
	&= -\,\frac{1}{t^2}\left.\frac{1}{b-a}
		\left(\xi(z,b)\xi_\nu(t^2,b) - \xi(z,a)\xi_\nu(t^2,a)\right)
		\right.
	\nonumber
	\\
	&\qquad -\frac{1}{t^2}\left.\int_0^bR(x)(q(x)-z)\xi(z,x)\xi_\nu(t^2,x)dx\right.
	\nonumber
	\\[1mm]
	&\qquad - \left.\frac{2}{t^2}
		\int_a^b\frac{1}{b-a}\xi'(z,x)\xi_\nu(t^2,x)dx\right..
	\label{eq:one}
\end{align}
Using \eqref{eq:A1-kost-sak-tes-unperturb}, \eqref{eq:another-estimate},
the fact that $\xi(z,x)$ and $\xi'(z,x)$ are entire with respect to $z$ 
for every $x\in(0,\infty)$, and noting that $\widetilde{q}\in L^1(0,b)$ due to
our hypotheses, 
one can see that \eqref{eq:one} implies
\begin{equation*}
\abs{\inner{\xi(\cc{z})}{R\,\xi_\nu(t_n^2)}}
	\le C_1 n^{-\nu-\frac52},\quad z\in\K,\quad n\ge n_0,
\end{equation*}
for some $C_1=C_1(\K,\nu,q,\gamma,a,b)>0$.

The second term in \eqref{eq:be-careful} is computed in \ref{app:third-computation}
(assuming $q$ locally absolutely continuous), the result being
\begin{align}
\inner{\xi(\cc{z})}{R\,\xi_{\nu,1}(t^2)}
	&= -\, \frac{1}{t^2}\left.\frac{1}{b-a}
			\left(\xi(z,b)\xi_{\nu,1}(t^2,b)-\xi(z,a)\xi_{\nu,1}(t^2,a)\right)\right.
	\nonumber
	\\[1mm]
	&\qquad - \frac{1}{t^2}
			\int_0^b R(x)q(x)\xi(z,x)\xi_\nu(t^2,x)dx
	\nonumber
	\\[1mm]
	&\qquad - \frac{1}{t^2}
			\int_0^b R(x)(q(x)-z)\xi(z,x)\xi_{\nu,1}(t^2,x)dx
	\nonumber
	\\[1mm]
	&\qquad + \frac{2}{t^2}
			\int_a^b\frac{1}{b-a}\xi'(z,x)\xi_{\nu,1}(t^2,x)dx.
	\label{eq:almost-done}
\end{align}
The estimates \eqref{eq:another-estimate} and \eqref{eq:estimate-xi-one}
imply
\begin{equation*}
\abs{\xi(z,s)\xi_{\nu,1}(t^2,s)}
	\le C_2 t^{-\nu-\frac12}
\end{equation*}
for some $C_2=C_2(\K,\nu,q,s)>0$; this bound takes care of the first
two terms in \eqref{eq:almost-done}. 
Also,
\begin{equation}
\label{eq:for-aliasing}
\int_0^b\abs{q(x)\xi(z,x)\xi_{\nu}(t^2,x)} dx
	\le C_2 \int_0^b \abs{xq(x)} 
		\frac{x^\nu e^{\abs{\im(\sqrt{z})}x}}
		{\left(1+\abs{\sqrt{z}}x\right)^{\nu+\frac12}}
		\frac{x^\nu}{(1+tx)^{\nu+\frac12}}dx
	\le C_2 t^{-\nu},
\end{equation}
where $C_2=C_2(\K,\nu,q,b)>0$; here we have used \eqref{eq:A1-kost-sak-tes-unperturb}
and \eqref{eq:another-estimate}, along with fact our hypothesis on $q$ implies 
$\widetilde{q}\in L^1(0,b)$. A similar argument shows
\begin{equation}
\label{eq:for-aliasing-too}
\int_0^b\abs{(q(x)-z)\xi(z,x)\xi_{\nu,1}(t^2,x)} dx
	\le C_2 t^{-\nu}
\end{equation}
and
\begin{equation*}
\int_a^b\abs{\xi'(z,x)\xi_{\nu,1}(t^2,x)} dx
	\le C_2 t^{-\nu-\frac12},
\end{equation*}
where the latter is due to \eqref{eq:another-estimate-prime} and
\eqref{eq:estimate-xi-one}. Therefore, taking into account 
\eqref{eq:asymptotic-eigenvalues}, one has
\begin{equation*}
\abs{\inner{\xi(\cc{z})}{R\,\xi_{\nu,1}(t^2)}}
	\le C_2 n^{-\nu -2},\quad z\in\K,\quad n\ge n_0,
\end{equation*}
for some $C_2=C_2(\K,\nu,q,\gamma,a,b)>0$.

Finally, with the help of Lemma~\ref{lem:some-tools}, the third term in 
\eqref{eq:be-careful} admits the bound
\begin{equation*}
\abs{\inner{\xi(\cc{z})}{R\,\Xi(t_n^2)}}
	\le \norm{\xi(\cc{z})}_{L^2(0,b)}\norm{\Xi(t_n^2)}_{L^2(0,b)}
	\le C_3 n^{-\nu- 3/2 -\delta},
\end{equation*}
with $C_3=C_3(\K,\nu,q,\gamma,b)>0$ and some $\delta>0$ (that depends on $r>2$ from
our hypothesis on $q$).
\end{proof}

\begin{theorem}
Fix $\nu,b\in(0,\infty)$, $a\in(0,b)$ and $\gamma\in[0,\pi)$. 
Assume that $q$ is a real-valued 
function belonging to $\text{AC}_\text{loc}(0,b]$ such that 
$\widetilde{q}\in L^r(0,b)$ for some $r\in(2,\infty]$.
Given $\epsilon=\{\epsilon_n\}\in\ell_\infty(\nu)$ and $F\in\cB_a$,
define
\begin{equation*}
F_\epsilon(z)=\sum_{\lambda_n\in\sigma(S_{b,\gamma})}
		\frac{J_{ab}(z,\lambda_n)}
		{K_b(\lambda_n,\lambda_n)}\left(F(\lambda_n)+\epsilon_n\right).
\end{equation*}
Then, for every compact set $\K\subset\C$, there exists $C(\K)=C(\K,\nu,q,\gamma,a,b)>0$ 
such that
\[
\abs{F(z)-F_\epsilon(z)} \le C(\K)\norm{\epsilon}_{\ell_\infty(\nu)},
	\quad z\in \K,
\]
uniformly for all $F\in\cB_a$.
\end{theorem}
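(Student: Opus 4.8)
The plan is to read the estimate straight off the modified sampling representation \eqref{eq:F-with-modified-kernel} together with the uniform summability established in Proposition~\ref{prop:condition-oversampling-general}; once those two ingredients are in place, only a short estimate remains. First I would note that, since $F\in\cB_a$ and $\cB_a$ sits isometrically inside $\cB_b$ (by Theorem~\ref{thm:invariance-spaces}, via the same extension-by-zero used to derive \eqref{eq:f-product-R}--\eqref{eq:F-with-modified-kernel}), the function $F$ obeys
\[
F(z)=\sum_{\lambda_n\in\sigma(S_{b,\gamma})}\frac{J_{ab}(z,\lambda_n)}{K_b(\lambda_n,\lambda_n)}\,F(\lambda_n),\qquad z\in\C .
\]
By the reproducing property $\abs{F(\lambda_n)}\le\sqrt{K_b(\lambda_n,\lambda_n)}\,\norm{F}_{\cB_b}$, so this series in fact converges absolutely and uniformly on every compact $\K\subset\C$, because $\sum_n\abs{J_{ab}(z,\lambda_n)}K_b(\lambda_n,\lambda_n)^{-1/2}$ does by condition \ref{hyp:convergence-oversampling}, which Proposition~\ref{prop:condition-oversampling-general} guarantees. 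The same condition, together with $\sup_n\abs{\epsilon_n}K_b(\lambda_n,\lambda_n)^{-1/2}\le C\norm{\epsilon}_{\ell_\infty(\nu)}$ (a consequence of Lemma~\ref{lem:denominator}, equivalently of Corollary~\ref{cor:spaces-lp}), shows the series defining $F_\epsilon$ converges absolutely on compacts, so $F_\epsilon$ is a well-defined entire function and subtraction term by term gives
\[
F(z)-F_\epsilon(z)=-\sum_{\lambda_n\in\sigma(S_{b,\gamma})}\frac{J_{ab}(z,\lambda_n)}{K_b(\lambda_n,\lambda_n)}\,\epsilon_n .
\]

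Next I would split $K_b(\lambda_n,\lambda_n)^{-1}=K_b(\lambda_n,\lambda_n)^{-1/2}\cdot K_b(\lambda_n,\lambda_n)^{-1/2}$ and estimate, for $z\in\K$,
\[
\abs{F(z)-F_\epsilon(z)}\le\sum_{\lambda_n\in\sigma(S_{b,\gamma})}\frac{\abs{J_{ab}(z,\lambda_n)}}{\sqrt{K_b(\lambda_n,\lambda_n)}}\cdot\frac{\abs{\epsilon_n}}{\sqrt{K_b(\lambda_n,\lambda_n)}}\le C\norm{\epsilon}_{\ell_\infty(\nu)}\sum_{\lambda_n\in\sigma(S_{b,\gamma})}\frac{\abs{J_{ab}(z,\lambda_n)}}{\sqrt{K_b(\lambda_n,\lambda_n)}} .
\]
Since the last series converges uniformly on $\K$ by \ref{hyp:convergence-oversampling}, the quantity $C(\K)\defeq C\sup_{z\in\K}\sum_{\lambda_n\in\sigma(S_{b,\gamma})}\abs{J_{ab}(z,\lambda_n)}K_b(\lambda_n,\lambda_n)^{-1/2}$ is finite and depends only on $\K,\nu,q,\gamma,a,b$, so $\abs{F(z)-F_\epsilon(z)}\le C(\K)\norm{\epsilon}_{\ell_\infty(\nu)}$ for $z\in\K$, uniformly over $F\in\cB_a$, which is the assertion.

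I do not expect any real difficulty left at this point: all the analytic content is concentrated in Proposition~\ref{prop:condition-oversampling-general}, whose proof leans on the refined decomposition \eqref{eq:xi-refined} and the bound $\abs{J_{ab}(z,\lambda_n)}\le Cn^{-\nu-3/2-\delta}$. The only things needing a moment's attention here are the use of the isometric inclusion $\cB_a\subset\cB_b$ (so as to be entitled to \eqref{eq:F-with-modified-kernel}) and the passage from a bounded weighted perturbation to the factor $\norm{\epsilon}_{\ell_\infty(\nu)}$ via Lemma~\ref{lem:denominator}; both are routine.
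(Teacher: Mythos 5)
Your argument is correct and is exactly the paper's intended proof: the paper disposes of this theorem in one line as "a straightforward consequence of Proposition~\ref{prop:condition-oversampling-general} combined with Corollary~\ref{cor:spaces-lp}," and your write-up simply makes explicit the term-by-term subtraction of \eqref{eq:F-with-modified-kernel} from $F_\epsilon$, the bound $\abs{\epsilon_n}K_b(\lambda_n,\lambda_n)^{-1/2}\le C\norm{\epsilon}_{\ell_\infty(\nu)}$ from Lemma~\ref{lem:denominator}, and the uniform summability from \ref{hyp:convergence-oversampling}. The only nitpick is that the isometric inclusion $\cB_a\subset\cB_b$ comes from the extension-by-zero of $\varphi\in L^2(0,a)$ (as you note parenthetically), not from Theorem~\ref{thm:invariance-spaces}, which concerns the coincidence $\cB_s=\cB_{\nu,s}$.
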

\begin{proof}
It is a straightforward consequence of 
Proposition~\ref{prop:condition-oversampling-general} combined with
Corollary~\ref{cor:spaces-lp}.
\end{proof}

\begin{remark}
A closer inspection to the estimates on \eqref{eq:almost-done} reveals that 
\[
C(\K,\nu,q,\gamma,a,b)=\cO((b-a)^{-1}),\quad a\to b.
\]
This fact, already known for Paley-Wiener spaces (cf. \eqref{eq:oversampling-PW}), is 
somewhat expected since the stability of the oversampling formula depends
on $\cB_a$ being a proper de Branges subspace of $\cB_b$.
\end{remark}

\section{Aliasing}
\label{sec:aliasing}


A de Branges space $\cB_b$ has the aliasing (or undersampling) property if,
given any de Branges subspace $\cB_a\subsetneq\cB_b$, there exists
a selfadjoint extension $S_{a,*}$ of $S_a$ such that the series
\begin{equation*}
\sum_{\lambda_n\in\sigma(S_{a,*})}
	\frac{K_a(z,\lambda_n)}{K_a(\lambda_n,\lambda_n)}\, F(\lambda_n),
\end{equation*}
converges absolutely in compact subsets of $\C$, for every function
$F\in\cB_b\setminus\cB_a$.


Suppose $\cB$ has the aliasing property. Then, for every 
$F\in\cB_b\setminus\cB_a$
one can define
\begin{equation*}
\tilde{F}(z) =
\sum_{\lambda_n\in\sigma(S_{a,*})}
	\frac{K_a(z,\lambda_n)}{K_a(\lambda_n,\lambda_n)}\, F(\lambda_n).
\end{equation*}
We expect $\tilde{F}$ be an approximation to $F$ obtained from samples
that are more ``sparse'' than those required for the sampling
formula \eqref{1ra formula kramer}. This vaguely worded claim can be made 
precise
for the class of de Branges spaces under consideration. With this purpose in 
mind, we 
formulate a suitable sufficient condition for aliasing.

\begin{enumerate}[resume*=suf_cond]
\item\label{hyp:convergence-undersampling} Given $0<a<b$,
	there exists $\gamma\in[0,\pi)$ such that the series
	\begin{equation*}
	\sum_{\lambda_n \in \sigma(H_{a,\gamma})}
	\frac{K_a(z,\lambda_n)}{K_a(\lambda_n,\lambda_n)}
	\xi(\lambda_n,x)
	\end{equation*}
	converges absolutely and uniformly for $(z,x)\in\K\times[0,b]$, 
	where $\K$ is any compact subset of $\C$.
\end{enumerate}

The bulk of this section consists of showing
that the de Branges spaces discussed in this work satisfy 
\ref{hyp:convergence-undersampling}. 
We start by defining
\begin{equation*}
Q_1(z,x) \defeq \int_0^x q(y)\theta_\nu(z,y)\xi_\nu(z,y) dy,
\quad
Q_2(z,x) \defeq \int_0^x q(y)\left(\xi_\nu(z,y)\right)^2 dy.
\end{equation*}
Clearly \eqref{eq:asymptotic-formula-xi-refined} becomes
\begin{equation}
\label{eq:the-other-xi}
\xi_{\nu,1}(z,x) 
	= \xi_\nu(z,x)Q_1(z,x) - \theta_\nu(z,x)Q_2(z,x).
\end{equation}
Also,
\begin{equation*}
Q_1(t^2,a)\xi_{\nu+1}(t^2,a) - t^{-2}Q_2(t^2,a)\theta_{\nu+1}(t^2,a)
= \int_0^a H_\nu(t^2,a,y)q(y)\xi_\nu(t^2,y)dy,
\end{equation*}
where
\begin{equation*}
H_\nu(t^2,x,y) \defeq
\xi_{\nu+1}(t^2,x)\theta_{\nu}(t^2,y) - t^{-2}\xi_{\nu}(t^2,y)\theta_{\nu+1}(t^2,x).
\end{equation*}
From \eqref{eq:fundamental-solution-unperturbed-good} and 
\eqref{eq:fundamental-solution-unperturbed-bad}, one can verify that
\begin{equation*}
H_\nu(t^2,x,y) = \frac{\pi}{2}t^{-1}\sqrt{xy}
\left(J_{\nu+1}(tx) Y_{\nu}(ty) - J_{\nu}(ty) Y_{\nu+1}(tx)\right).
\end{equation*}

\begin{lemma}
\label{lem:relief}
Suppose $\widetilde{q}\in L^r(0,a)$ with $r\in(2,\infty]$. There exist $C>0$ and
$\delta>0$ such that
\begin{equation*}
\abs{Q_1(t^2,a)\xi_{\nu+1}(t^2,a) - t^{-2}Q_2(t^2,a)\theta_{\nu+1}(t^2,a)}
	\le C t^{-\nu-\frac32 - \delta}
\end{equation*}
for all $t\ge 1$.
\end{lemma}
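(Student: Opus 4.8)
The plan is to estimate the quantity
$\abs{Q_1(t^2,a)\xi_{\nu+1}(t^2,a) - t^{-2}Q_2(t^2,a)\theta_{\nu+1}(t^2,a)}$
by exploiting the integral representation
\[
Q_1(t^2,a)\xi_{\nu+1}(t^2,a) - t^{-2}Q_2(t^2,a)\theta_{\nu+1}(t^2,a)
	= \int_0^a H_\nu(t^2,a,y)q(y)\xi_\nu(t^2,y)\,dy
\]
provided just above the statement, and then bounding the kernel $H_\nu$ together with $\xi_\nu$ pointwise in $y$. First I would record the size of $\xi_\nu(t^2,y)$ from \eqref{eq:A1-kost-sak-tes-unperturb} (for real $t$ this is $\abs{\xi_\nu(t^2,y)}\le C(y/(1+ty))^{\nu+1/2}$, with no exponential factor), and I would derive an analogous bound for $H_\nu(t^2,a,y)$. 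Using the explicit formula
$H_\nu(t^2,x,y)=\tfrac{\pi}{2}t^{-1}\sqrt{xy}\,(J_{\nu+1}(tx)Y_\nu(ty)-J_\nu(ty)Y_{\nu+1}(tx))$
together with the standard bounds $\abs{J_\nu(w)}\le C(w/(1+w))^\nu$ and $\abs{Y_\nu(w)}\le C(1+w)^{-1/2}(w/(1+w))^{-\nu}$ for $w>0$ (the Bessel/Neumann analogues of the estimates already cited for $\xi_\nu,\theta_\nu$ in \cite{kostenko}), one gets, after collecting powers, a bound of the schematic form
\[
\abs{H_\nu(t^2,a,y)}\le C\,t^{-1}\,\frac{1}{(1+ty)^{1/2}}\Big(\frac{y}{1+ty}\Big)^{-\nu}\cdot(\text{harmless powers of }a),
\]
valid for $0<y\le a$ and $t\ge1$, and likewise a bound $\abs{t^{-2}\theta_{\nu+1}(t^2,a)},\abs{\xi_{\nu+1}(t^2,a)}\le Ct^{-\nu-3/2}$ for the ``boundary'' prefactors, should I prefer to split the two terms rather than use the combined kernel.

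Next I would plug these into the integral and reduce everything to controlling
\[
\int_0^a \abs{q(y)}\,\abs{H_\nu(t^2,a,y)}\,\abs{\xi_\nu(t^2,y)}\,dy
	\;\le\; C\,t^{-1}\int_0^a \frac{\abs{q(y)}\,y^{1/2}}{(1+ty)}\,dy
	\;\lesssim\; C\,t^{-1}\int_0^a \frac{y\abs{q(y)}}{1+ty}\,\frac{dy}{y^{1/2}}.
\]
Here the $y$-powers from $\xi_\nu$ ($y^{\nu+1/2}$ upstairs, $(1+ty)^{\nu+1/2}$ downstairs) and from $H_\nu$ ($y^{1/2-\nu}$ upstairs or so, $(1+ty)^{-1/2+\nu}$ downstairs, plus the overall $\sqrt{y}$) should combine to leave a net integrand comparable to $\widetilde q(y)$ divided by a positive power of $(1+ty)$ — I would check the exponent bookkeeping carefully, since it is exactly the place where the balance between $\nu+1/2$ and $-\nu$ has to come out right (this is the combined effect of the two terms in $H_\nu$, one carrying $t^{-2}$ relative to the other). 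Once the integrand is of the form $\widetilde q(y)\,(1+ty)^{-1-\delta'}$ or, after Hölder, $\widetilde q(y)$ times a power of $(1+ty)$ that is $L^p$-summable on $(0,a)$ with the exponent coming from $r>2$, I apply Hölder with $1/p+1/r=1$ exactly as in Lemma~\ref{lem:some-tools} / equation \eqref{eq:asym-residual-term}: $\|\widetilde q\|_{L^r(0,a)}$ is finite by hypothesis, and the remaining factor contributes $\cO(t^{-1/p})=\cO(t^{-\delta})$ for some $\delta>0$ depending on $r$. Multiplying by the prefactor $t^{-1}$ already pulled out, and by $t^{-\nu-1/2}$ absorbed into the $y$-integral, yields the claimed bound $Ct^{-\nu-3/2-\delta}$.

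The main obstacle is the exponent accounting: one must verify that the combination of the singular-at-zero behaviour of $H_\nu$ (which, because of the Neumann function, blows up like $y^{-\nu}$ at $y=0$ even though the overall $\sqrt{xy}$ tempers it) against the vanishing-at-zero behaviour $y^{\nu+1/2}$ of $\xi_\nu$ really does leave an integrand that is $L^1$ near $0$ after inserting $q$ with $\widetilde q=xq\in L^r$, and that the large-argument decay of the Bessel factors is captured correctly by the $(1+ty)^{-1/2}$ factor so that the $t$-power comes out as $t^{-\nu-3/2}$ before the Hölder gain. A secondary point is to make sure the estimate is uniform in $t\ge1$ (no issue at $t=1$, and the asymptotics \eqref{eq:asymptotic-bessel-j} give uniformity for large $t$); for small positive $t$ one does not need the statement, which is why it is phrased for $t\ge1$. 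Modulo this bookkeeping — which parallels, with an extra $t^{-1}$ from the kernel $H_\nu$, the computation already done in Lemma~\ref{lem:some-tools} — the proof is a direct estimate followed by one application of Hölder's inequality.
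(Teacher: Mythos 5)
Your strategy coincides with the paper's: start from the integral representation $\int_0^a H_\nu(t^2,a,y)q(y)\xi_\nu(t^2,y)\,dy$, bound $H_\nu$ pointwise by Bessel-function estimates in the spirit of \cite[Lemma A.1]{kostenko}, multiply by the bound \eqref{eq:A1-kost-sak-tes-unperturb} for $\xi_\nu$, and finish with one application of H\"older against $\norm{\widetilde{q}}_{L^r(0,a)}$. The problem is that the exponent bookkeeping you defer is exactly where your displayed bounds fail, and taken literally they do not prove the lemma. Two powers of $t$ are lost in your bound for $H_\nu$. First, the argument of $Y_\nu$ is $ty$, so its bound contributes $(ty/(1+ty))^{-\nu}=t^{-\nu}(y/(1+ty))^{-\nu}$; writing $(y/(1+ty))^{-\nu}$ discards a factor $t^{-\nu}$. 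Second, for $t\ge 1$ the boundary factors $J_{\nu+1}(ta)$ and $Y_{\nu+1}(ta)$ are $\cO(t^{-1/2})$, and your bound $\abs{J_{\nu+1}(w)}\le C(w/(1+w))^{\nu+1}$, lacking the $(1+w)^{-1/2}$ decay, discards another $t^{-1/2}$ (you also drop the $\sqrt{y}$ from the prefactor $\sqrt{xy}$, and the second term $J_\nu(ty)Y_{\nu+1}(ta)$, which does not decay in $ty$, is absent from your schematic bound). Consequently your displayed chain gives only $\abs{H_\nu\,\xi_\nu}\le Ct^{-1}y^{1/2}(1+ty)^{-1}$ and, after H\"older, $\cO(t^{-1/2-1/p})$, which is weaker than $t^{-\nu-3/2-\delta}$ for every $\nu>0$; the factor $t^{-\nu-1/2}$ you claim to ``absorb into the $y$-integral'' at the end does not come from anything in the preceding displays.

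The repair is precisely the bookkeeping you postponed, and it is what the paper does. For $t\ge1$ one has
\begin{equation*}
\abs{H_\nu(t^2,a,y)}\le C t^{-2}\left(\left(\tfrac{1+ty}{ty}\right)^{\nu-\frac12}+\left(\tfrac{ty}{1+ty}\right)^{\nu+\frac12}\right),
\end{equation*}
and since $\abs{\xi_\nu(t^2,y)}\le C t^{-\nu-\frac12}\bigl(ty/(1+ty)\bigr)^{\nu+\frac12}$ with $ty/(1+ty)\le 1$, both terms of the product collapse (the exponents $\nu+\tfrac12$ and $-\nu+\tfrac12$ cancel to leave exactly one power of $ty/(1+ty)$, and $(ty/(1+ty))^{2\nu+1}\le ty/(1+ty)$), giving $\abs{H_\nu(t^2,a,y)\xi_\nu(t^2,y)}\le Ct^{-\nu-\frac52}\,ty/(1+ty)$. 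Integrating against $\abs{q}$ and applying H\"older as in \eqref{eq:asym-residual-term} then yields $Ct^{-\nu-\frac32-\frac1p}$ with $1/p+1/r=1$, i.e.\ $\delta=1/p>0$, which is the paper's conclusion.
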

\begin{proof}
Resorting to an argument like in the proof of \cite[Lemma~A.1]{kostenko}, one
can prove (assuming $t\in\R$)
\begin{equation*}
\abs{H_\nu(t^2,x,y)}
\le C t^{-2}\left(\left(\frac{tx}{1+tx}\right)^{\nu+\frac32}
	\left(\frac{1+ty}{ty}\right)^{\nu-\frac12}
	+ \left(\frac{ty}{1+ty}\right)^{\nu+\frac12}
	\left(\frac{1+tx}{tx}\right)^{\nu+\frac12}\right).
\end{equation*}
Noting that the function $f(x)=x(1+x)^{-1}$ ($x\in\R_+$) is increasing and bounded, and
recalling \eqref{eq:A1-kost-sak-tes-unperturb}, it follows that
\begin{align*}
\int_0^a &\abs{H_\nu(t^2,a,y)q(y)\xi_\nu(t^2,y)}dy
	\\
	&\le C t^{-\nu-\frac52}\left(\int_0^a \frac{ty}{1+ty}\abs{q(y)}dy
	+ \left(\frac{1+ta}{ta}\right)^{\nu+\frac12}
	\int_0^a \left(\frac{ty}{1+ty}\right)^{2\nu+1}\abs{q(y)}dy\right).
\end{align*}
Assuming $t\ge 1$, it reduces to
\begin{equation*}
\int_0^a \abs{H_\nu(t^2,a,y)q(y)\xi_\nu(t^2,y)}dy
	\le C t^{-\nu-\frac52}\int_0^a \frac{ty}{1+ty}\abs{q(y)}dy.
\end{equation*}
Suppose $r\in(2,\infty)$. Then,
\begin{equation*}
\int_0^a \frac{ty}{1+ty}\abs{q(y)}dy
	\le t\left(\int_0^a (1+ty)^{-p}dy\right)^{\frac1p}\norm{\widetilde{q}}_{L^r(0,a)}
	\le C t^{\frac1r}.
\end{equation*}
Therefore,
\begin{equation*}
\int_0^a \abs{H_\nu(t^2,a,y)q(y)\xi_\nu(t^2,y)}dy
	\le C t^{-\nu-\frac32 - \frac1p}.
\end{equation*}
The argument for $r=\infty$ is analogous hence omitted.
\end{proof}

\begin{proposition}
\label{prop:condition-aliasing-general}
Given $\nu\in(0,\infty)$ and $0<a<b<\infty$,
\ref{hyp:convergence-undersampling} is satisfied for all $\gamma\in(0,\pi)$ 
whenever $q\in\text{AC}_\text{loc}(0,b]$ in addition to 
$\widetilde{q}\in L^r(0,b)$ for some $r\in(2,\infty]$.
\end{proposition}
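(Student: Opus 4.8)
The plan is to estimate, uniformly for $(z,x)\in\K\times[0,b]$, the general term of the series
\[
\frac{K_a(z,\lambda_n)}{K_a(\lambda_n,\lambda_n)}\,\xi(\lambda_n,x),
\qquad \lambda_n=t_n^2\in\sigma(H_{a,\gamma}),
\]
and show it is $\cO(n^{-1-\delta})$ for some $\delta>0$, so that the Weierstrass $M$-test gives absolute and uniform convergence. The denominator is already controlled: by Lemma~\ref{lem:denominator} (applied with $s=a$), $K_a(\lambda_n,\lambda_n)=\norm{\xi(t_n^2,\cdot)}^2_{L^2(0,a)}\asymp n^{-2\nu-1}$ for $n$ large. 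For the factor $\xi(\lambda_n,x)$ with $x\in[0,b]$, note that $\lambda_n>0$ eventually and $\sqrt{\lambda_n}=t_n$ is real, so \eqref{eq:another-estimate} gives $\abs{\xi(t_n^2,x)}\le C\,(x/(1+t_nx))^{\nu+1/2}\le C\,t_n^{-\nu-1/2}$, hence $\abs{\xi(\lambda_n,x)}\le C\,n^{-\nu-1/2}$ uniformly on $[0,b]$. Thus the whole task reduces to showing that the numerator satisfies
\[
\abs{K_a(z,t_n^2)}=\abs{\inner{\xi(\cc{z},\cdot)}{\xi(t_n^2,\cdot)}_{L^2(0,a)}}
\le C\, n^{-\nu-\tfrac32-\delta},\qquad z\in\K,
\]
because then the general term is bounded by $C\,n^{-\nu-3/2-\delta}\cdot n^{2\nu+1}\cdot n^{-\nu-1/2}=C\,n^{-1-\delta}$.

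To obtain this bound on $\inner{\xi(\cc{z})}{\xi(t_n^2)}_{L^2(0,a)}$ I would follow the decomposition strategy already used in the proof of Proposition~\ref{prop:condition-oversampling-general}, but now with the interval $(0,a)$ and without the weight $R_{ab}$ — in fact this case is essentially the special case ``$b=a$, $R\equiv 1$'' of that computation. Write $\xi(z,x)=\xi_\nu(z,x)+\xi_{\nu,1}(z,x)+\Xi(z,x)$ from \eqref{eq:xi-refined} for the second argument, giving three terms. The term $\inner{\xi(\cc{z})}{\Xi(t_n^2)}_{L^2(0,a)}$ is $\cO(n^{-\nu-3/2-\delta})$ directly from Cauchy--Schwarz, $\norm{\xi(\cc{z})}_{L^2(0,a)}\le C(\K)$, and Lemma~\ref{lem:some-tools}. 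For $\inner{\xi(\cc{z})}{\xi_\nu(t_n^2)}_{L^2(0,a)}$, an integration by parts against the equation $\tau_\nu\xi_\nu(t_n^2,\cdot)=t_n^2\xi_\nu(t_n^2,\cdot)$ produces a factor $t_n^{-2}$ in front of boundary terms at $x=a$ (and the vanishing left endpoint) plus an integral of $(q(x)-z)\xi(z,x)\xi_\nu(t_n^2,x)$; using \eqref{eq:A1-kost-sak-tes-unperturb}, \eqref{eq:another-estimate} and $\widetilde{q}\in L^1(0,a)$ this is $\cO(t_n^{-2}\cdot t_n^{-\nu-1/2})=\cO(n^{-\nu-5/2})$. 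The middle term $\inner{\xi(\cc{z})}{\xi_{\nu,1}(t_n^2)}_{L^2(0,a)}$ is the delicate one: here I would substitute \eqref{eq:the-other-xi}, integrate by parts once more (this is where local absolute continuity of $q$ is needed, exactly as in \ref{app:third-computation}), and collect a $t_n^{-2}$ in front of the resulting boundary and integral contributions. The boundary contribution at $x=a$ involves precisely the quantity $Q_1(t_n^2,a)\xi_{\nu+1}(t_n^2,a)-t_n^{-2}Q_2(t_n^2,a)\theta_{\nu+1}(t_n^2,a)$ that Lemma~\ref{lem:relief} bounds by $C\,t_n^{-\nu-3/2-\delta}$, which combined with the $t_n^{-2}$ and the estimate for $\xi(z,a)$ is more than enough; the remaining integral pieces carry factors $\int_0^a \abs{q}\,\abs{\xi_\nu}\,\abs{\xi}$ and are handled as in \eqref{eq:for-aliasing}--\eqref{eq:for-aliasing-too}.

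The main obstacle is the middle term: one must be careful that the integration by parts applied to $\xi_{\nu,1}$ actually produces, as its leading boundary term at $a$, the combination controlled by Lemma~\ref{lem:relief} rather than something that only decays like $t_n^{-\nu-1/2}$ with no extra $\delta$. Concretely, after pulling out one factor of $t_n^{-2}$ from the integration by parts, the surviving $x=a$ boundary term is $\xi(z,a)\,\xi_{\nu,1}(t_n^2,a)$ up to a derivative term; writing $\xi_{\nu,1}(t_n^2,a)=\xi_\nu(t_n^2,a)Q_1(t_n^2,a)-\theta_\nu(t_n^2,a)Q_2(t_n^2,a)$ and recognizing — via $\xi_\nu(t^2,a)\asymp t^{-\nu-1/2}$, $\theta_\nu(t^2,a)\asymp t^{\nu-1/2}$ and the relation between $\xi_\nu,\theta_\nu$ at consecutive indices — that this is exactly $t_n^{-\nu-1/2}$ times the expression of Lemma~\ref{lem:relief}, one gets the required $\cO(t_n^{-2}\cdot t_n^{\nu-1/2}\cdot t_n^{-\nu-3/2-\delta})=\cO(n^{-\nu-3-\delta})$ — where I have used $\xi(z,a)=\cO(1)$ on $\K$ and absorbed the mismatch in powers of $t_n$ into the ample slack already available. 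Once the three terms are in hand, summing the $M$-series and invoking the sampling/Parseval machinery of Section~\ref{sec:prel} completes the verification of \ref{hyp:convergence-undersampling}.
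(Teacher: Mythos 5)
Your overall architecture matches the paper's: reduce to showing $\abs{K_a(z,t_n^2)}\le Cn^{-\nu-\frac32-\delta}$, split $\xi(t_n^2,\cdot)$ via \eqref{eq:xi-refined} into three pieces, handle $\Xi$ by Cauchy--Schwarz and Lemma~\ref{lem:some-tools}, and handle the $\xi_{\nu,1}$ piece through the boundary combination controlled by Lemma~\ref{lem:relief}. However, there is a genuine gap in your treatment of the leading term $\inner{\xi(\cc{z})}{\xi_\nu(t_n^2)}_{L^2(0,a)}$. The double integration by parts (see \eqref{eq:for-aliasing-1}) does \emph{not} put a factor $t_n^{-2}$ in front of every boundary term: the identity $\xi_\nu(t^2,x)=x^{-\nu-\frac12}\partial_x(x^{\nu+\frac12}\xi_{\nu+1}(t^2,x))$ carries no power of $t$, so the leading boundary contribution is the bare term $\xi(z,a)\,\xi_{\nu+1}(t_n^2,a)$. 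Generically $\abs{\xi_{\nu+1}(t_n^2,a)}\asymp t_n^{-\nu-\frac32}$ by \eqref{eq:A1-kost-sak-tes-unperturb}, so this term is only $\cO(n^{-\nu-\frac32})$ --- exactly at the threshold, giving a general term $\cO(n^{-1})$ and a divergent majorant. Your claimed $\cO(n^{-\nu-\frac52})$ is not obtainable this way.

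The missing idea is precisely where the hypothesis $\gamma\in(0,\pi)$ enters, and your proposal never uses it. For $\gamma\ne 0$, \eqref{eq:asymptotic-eigenvalues} and \eqref{eq:asym-residual-term} give $t_na=(n+\tfrac{2\nu+1}{4})\pi+\cO(n^{-1/p})$, which places $t_na$ asymptotically at the zeros of the cosine in the expansion \eqref{eq:asymptotic-bessel-j} for $J_{\nu+1}$; this yields the extra decay $\abs{\xi_{\nu+1}(t_n^2,a)}\le Cn^{-\nu-\frac32-\frac1p}$ (cf.\ \eqref{eq:another-bound}) and rescues the estimate. For $\gamma=0$ the phase lands at the maxima of that cosine and no gain is possible, which is why the proposition excludes the Dirichlet case. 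Relatedly, your framing of the computation as ``the case $b=a$, $R\equiv1$ of Proposition~\ref{prop:condition-oversampling-general}'' obscures this: in the oversampling computation the dangerous bare boundary terms at the right endpoint cancel between the $(0,a)$ and $(a,b)$ pieces (see \eqref{eq:first-result}), which is why oversampling works for all $\gamma\in[0,\pi)$, whereas in aliasing the bare term survives. (Your power counting for the middle term is also off --- the combination of Lemma~\ref{lem:relief} appears multiplied by $\xi(z,a)=\cO(1)$, with no extra $t_n^{-2}$, so that term is $\cO(n^{-\nu-\frac32-\frac1p})$ with no slack --- but that piece still closes; the first term is the one that does not.)
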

\begin{proof}
We show the statement assuming $r\in(2,\infty)$; the
proof for the remaining case is similar save for some minor differences. 
Choose any $\gamma\in(0,\pi)$ and denote 
$\sigma(H_{a,\gamma})=\{t_n^2\}_{n=0}^\infty$.
Given a compact subset $\K\subset\C$, it suffices to show that, for some 
$\delta>0$,
\[
\abs{\frac{K_a(z,t_n^2)}{K_a(t_n^2,t_n^2)}}\abs{\xi(t_n^2,x)}\le C n^{-1-\delta}
\]
for all $z\in\K$, $x\in[0,b]$, and $n$ sufficiently large. 
By Lemma~\ref{lem:denominator}, there exists $n_0\in\N$ such that
\[
\abs{K_a(t_n^2,t_n^2)}\ge C n^{-2\nu-1}
\]
for $n\ge n_0$; we can assume $n_0$ so large that also $t_n^2\not\in\K$ for 
$n\ge n_0$. Furthermore, in view of \eqref{eq:another-estimate} and 
\eqref{eq:asymptotic-eigenvalues},
\[
\abs{\xi(t_n^2,x)}
	\le C n^{-\nu-\frac12}
\]
for all $n\ge n_0$ and $x\in[0,b]$. Therefore, it suffices to
show that
\[
\abs{K_a(z,t_n^2)}\le C n^{-\nu-\frac32-\delta},
\]
for some $\delta>0$.

Abbreviate $\inner{\cdot}{\cdot}:=\inner{\cdot}{\cdot}_{L^2(0,a)}$. We have
\begin{equation}
\label{eq:aliasing-numerator}
\abs{K_a(z,t_n^2)}
	\le \abs{\inner{\xi(\cc{z})}{\xi_\nu(t_n^2)}}
	 +  \abs{\inner{\xi(\cc{z})}{\xi_{\nu,1}(t_n^2)}}
	 +  \abs{\inner{\xi(\cc{z})}{\Xi(t_n^2)}}.
\end{equation}
From \eqref{eq:for-aliasing-1},
\begin{align*}
\inner{\xi(z)}{\xi_\nu(t_n^2)}
	&= \left. \xi(z,a)\xi_{\nu+1}(t_n^2,a)\right.
			\nonumber
	\\[1mm]
	&\qquad 
	+ \left.\frac{1}{t^2}\left(\xi'(z,a)
			- (\nu + \tfrac12)a^{-1}\xi(z,a)\right)\xi_{\nu}(t_n^2,a)\right.
			\nonumber
	\\[1mm]
	&\qquad - \left.\frac{1}{t_n^2}
		\int_0^a(q(x)-z)\xi(z,x)\xi_\nu(t_n^2,x)dx\right..
		\nonumber
\end{align*}
Note that \eqref{eq:asymptotic-eigenvalues} and \eqref{eq:asym-residual-term} imply
\[
t_na = \left(n+\frac{2\nu+1}{4}\right)\pi + \cO(n^{-\frac1p}),
\]
since $\gamma\ne 0$ and $p\in(1,2)$, where $1/r+1/p=1$. This in turn implies
\begin{equation}
\label{eq:another-bound}
\abs{\xi_{\nu}(t_n^2,a)}\le C n^{-\nu-\frac12},\qquad
\abs{\xi_{\nu+1}(t_n^2,a)}\le C n^{-\nu - \frac12 -\frac{1}{p}}.
\end{equation}
Also,
\begin{equation*}
\int_0^a\abs{(q(x)-z)\xi(z,x)\xi_{\nu}(t^2,x)} dx
	\le C n^{-\nu}
\end{equation*}
uniformly for $z\in\K$.
Therefore, there exists $C_1=C_1(\K,\nu,\gamma,a)>0$ such that
\begin{equation*}
\abs{\inner{\xi_\nu(\cc{z})}{\xi_\nu(t_n^2)}}
\le C_1 n^{-\nu - \frac32 - \frac1p},\quad z\in\K,\quad n\ge n_0.
\end{equation*}

We now look at the second term in \eqref{eq:aliasing-numerator}.
As computed in \ref{app:second-computation} (see \eqref{eq:next-to-last}),
\begin{align*}
\inner{\xi(\cc{z})}{\xi_{\nu,1}(t^2)}
	\nonumber
	&= \left.\xi(z,a)Q_1(t^2,a)\xi_{\nu+1}(t^2,a)\right.
		- \frac{1}{t^2}\left.\xi(z,a)Q_2(t^2,a)\theta_{\nu+1}(t^2,a)\right.
	\nonumber
	\\[1mm]
	&\qquad - \frac{1}{t^2}
		\left.\left((\nu + \tfrac12)a^{-1}\xi(z,a) -
		\xi'(z,a)\right)\xi_{\nu,1}(t^2,a)\right.
	\nonumber
	\\[1mm]
	&\qquad - \frac{1}{t^2}
			\int_0^a(q(x)-z)\xi(z,x)\xi_{\nu,1}(t^2,x)dx
	\nonumber
	\\
	&\qquad - \frac{1}{t^2}
			\int_0^a \xi(z,x)q(x)\xi_{\nu}(t^2,x)dx.
	\nonumber
\end{align*}
As a consequence of Lemma~\ref{lem:relief},
\begin{equation*}
\abs{\xi(z,a)}
\abs{Q_1(t^2,a)\xi_{\nu+1}(t^2,a)-t^{-2}Q_2(t^2,a)\theta_{\nu+1}(t^2,a)}
	\le C_2 t^{-\nu -\frac32 - \frac1p},\quad z\in\K,
\end{equation*}
where $1/r + 1/p = 1$. Also,
\begin{equation*}
\abs{(\nu + \tfrac12)a^{-1}\xi(z,a) - \xi'(z,a)}\abs{\xi_{\nu,1}(t^2,a)}
	\le C_2 t^{-\nu-\frac12},\quad z\in\K.
\end{equation*}
By the same argument that leads to \eqref{eq:for-aliasing} 
and \eqref{eq:for-aliasing-too},
\begin{equation*}
\int_0^a\abs{(q(x)-z)\xi(z,x)\xi_{\nu,1}(t^2,x)}dx \le C_2 t^{-\nu},\quad z\in\K
\end{equation*}
and
\begin{equation*}
\int_0^a \abs{\xi(z,x)q(x)\xi_{\nu}(t^2,x)}dx \le C_2 t^{-\nu},\quad z\in\K.
\end{equation*}
Therefore, for all $z\in\C$ and $n\ge n_0$,
\begin{equation*}
\abs{\inner{\xi(\cc{z})}{\xi_{\nu,1}(t_n^2)}}
\le C_2n^{-\nu-\frac32-\frac1p}
\end{equation*}
for some $C_2=C_2(\K,\nu,q,\gamma,a)>0$.

Finally, the last term in \eqref{eq:aliasing-numerator} can be bounded as
\begin{equation*}
\abs{\inner{\xi(\cc{z})}{\Xi(t_n^2)}}
	\le \norm{\xi(\cc{z})}_{L^2(0,a)}\norm{\Xi(t_n^2)}_{L^2(0,a)}
	\le C_3 n^{-\nu- \frac32 -\delta},
\end{equation*}
for some $C_3=C_3(\K,\nu,q,\gamma,a)>0$ and all $n\ge n_0$.
The proof is now complete.
\end{proof}

The proof of the following assertion is nearly identical to the proof of
Lemma~4.2 in \cite{siltoluri}, hence omitted.

\begin{lemma}
\label{propiedades de xi extendida}
Assume that \ref{hyp:convergence-undersampling} is met. Define
\begin{equation*}
\xi^\text{\rm ext}_a(z,x) 
	\defeq \sum_{\lambda_n \in \sigma(H_{a,\gamma})} 
	\frac{K_a(z,\lambda_n)}{K_a(\lambda_n,\lambda_n)}\xi(\lambda_n,x),
	\quad  x \in [0,b], \quad z \in\C.
\end{equation*}
Then, for each $z\in\C$,
\begin{enumerate}[label={(\roman*)}]
\item $\xi^\text{\rm ext}_a(z,\cdot)$ is continuous in $[0,b]$,
	\label{lem:xi-ext-is-continuous}
\item $\xi^\text{\rm ext}_a(z,x) = \xi(z,x)$ for a. e. $x \in [0,a]$, and 
	\label{lem:xi-ext-is-xi}
\item the function
	$h_{ab}(z):=\displaystyle\sup_{x \in [a,b]} \abss{\xi^\text{\rm ext}_a(z,x) 
	- 
	\xi(z,x)}$
	is continuous in $\C$. \label{lem:h-is-continuous}
\end{enumerate}
Moreover,
\begin{enumerate}[resume*]
\item if $F(z)=\inner{\xi(\cc{z})}{\psi}_{L^2(0,b)}$ with $\psi \in L^2(0,b)$, 
then
	\begin{equation}   \label{xi ext prod int}
	\inner{ \xi^\text{\rm ext}_a(\cc{z}) }{\psi}_{L^2(0,b)}  =
	\sum_{\lambda_n \in \sigma(H_{a,\gamma})} 
	\frac{K_a(z,\lambda_n)}{K_a(\lambda_n,\lambda_n)} F(\lambda_n) ,
	\qquad z \in \C .
	\end{equation}
	\label{lem:xi-ext-inner-psi}
\end{enumerate}
\end{lemma}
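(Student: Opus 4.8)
The plan is to extract all four assertions from \ref{hyp:convergence-undersampling}, which guarantees that the series defining $\xi^\text{\rm ext}_a(z,x)$ converges absolutely and uniformly on $\K\times[0,b]$ for every compact $\K\subset\C$. Each summand $K_a(z,\lambda_n)K_a(\lambda_n,\lambda_n)^{-1}\xi(\lambda_n,\cdot)$ is jointly continuous in $(z,x)$ on $\C\times[0,b]$: it is the product of an entire function of $z$ with a continuous function of $x$, the latter extending continuously to $x=0$ with value $0$ by \eqref{eq:another-estimate} since $\nu>0$. Hence the uniform limit $\xi^\text{\rm ext}_a$ is itself jointly continuous on $\C\times[0,b]$, which already yields \ref{lem:xi-ext-is-continuous}.

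For \ref{lem:xi-ext-is-xi} I would apply the $L^2(0,a)$-expansion \eqref{eq:decomposition-function-L2} (with $s=a$) to $\varphi=\xi(z,\cdot)$, which belongs to $L^2(0,a)$ by \eqref{eq:another-estimate} (integrability near $0$ uses $\nu>0$) together with continuity on $(0,a]$. Because $\lambda_n\in\R$ and $\xi$ is real entire, $\inner{\xi(\lambda_n,\cdot)}{\xi(z,\cdot)}_{L^2(0,a)}=\int_0^a\xi(\lambda_n,x)\xi(z,x)\,dx=K_a(z,\lambda_n)$, so \eqref{eq:decomposition-function-L2} becomes exactly $\xi(z,\cdot)=\sum_n K_a(z,\lambda_n)K_a(\lambda_n,\lambda_n)^{-1}\xi(\lambda_n,\cdot)$ with convergence in $L^2(0,a)$. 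The partial sums of this series are the restrictions to $[0,a]$ of those defining $\xi^\text{\rm ext}_a(z,\cdot)$, which converge uniformly on $[0,b]$, hence in $L^2(0,a)$, to $\xi^\text{\rm ext}_a(z,\cdot)|_{[0,a]}$; uniqueness of $L^2$-limits then forces $\xi(z,\cdot)=\xi^\text{\rm ext}_a(z,\cdot)$ a.e.\ on $[0,a]$.

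For \ref{lem:h-is-continuous}, joint continuity of both $\xi^\text{\rm ext}_a$ and $\xi$ on $\C\times[a,b]$ makes $g(z,x):=\xi^\text{\rm ext}_a(z,x)-\xi(z,x)$ jointly continuous there; fixing $z_0$ and a compact neighbourhood $\K_0$ of it, $g$ is uniformly continuous on $\K_0\times[a,b]$, so $\sup_{x\in[a,b]}\abs{g(z,x)-g(z_0,x)}\to0$ as $z\to z_0$, and since $\abs{h_{ab}(z)-h_{ab}(z_0)}\le\sup_{x\in[a,b]}\abs{g(z,x)-g(z_0,x)}$, the function $h_{ab}$ is continuous. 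For \ref{lem:xi-ext-inner-psi}: since the series for $\xi^\text{\rm ext}_a(\cc z,\cdot)$ converges uniformly on the bounded interval $[0,b]$, it may be paired term by term with $\psi\in L^2(0,b)\subset L^1(0,b)$; using that $K_a(\cdot,\lambda_n)$ is real entire (so $\cc{K_a(\cc z,\lambda_n)}=K_a(z,\lambda_n)$), that $K_a(\lambda_n,\lambda_n)>0$, and that $\inner{\xi(\lambda_n,\cdot)}{\psi}_{L^2(0,b)}=F(\lambda_n)$ because $\lambda_n\in\R$, one obtains \eqref{xi ext prod int}.

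The only steps demanding any care are the two term-by-term operations — interchanging summation with the $L^2(0,a)$-limit in \ref{lem:xi-ext-is-xi}, and with integration against $\psi$ in \ref{lem:xi-ext-inner-psi} — and both are licensed purely by the uniform/absolute convergence on bounded $x$-intervals granted by \ref{hyp:convergence-undersampling}. No new estimates beyond \eqref{eq:another-estimate}, Lemma~\ref{lem:denominator} and \ref{hyp:convergence-undersampling} itself are required, which is precisely why the argument runs parallel to that of \cite[Lemma~4.2]{siltoluri}.
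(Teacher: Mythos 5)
Your proposal is correct, and it supplies exactly the routine argument that the paper omits by deferring to \cite[Lemma~4.2]{siltoluri}: uniform convergence from \ref{hyp:convergence-undersampling} gives (i) and (iii), the eigenfunction expansion \eqref{eq:decomposition-function-L2} applied to $\varphi=\xi(z,\cdot)$ on $(0,a]$ together with uniqueness of $L^2$-limits gives (ii), and term-by-term integration against $\psi$ gives (iv). The supporting identifications you make along the way (e.g.\ $\inner{\xi(\lambda_n,\cdot)}{\xi(z,\cdot)}_{L^2(0,a)}=K_a(z,\lambda_n)$ and $\cc{K_a(\cc{z},\lambda_n)}=K_a(z,\lambda_n)$ since $K_a(\cdot,\lambda_n)$ is real entire) are all valid, so there is nothing to correct.
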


Our main assertion concerning aliasing follows from Remark~\ref{rem:unitary-invariants},
Proposition~\ref{prop:condition-aliasing-general} and 
Lemma~\ref{propiedades de xi extendida}:

\begin{theorem}
Suppose the hypotheses of Proposition~\ref{prop:condition-aliasing-general}. 
For every $F\in\cB_b$, define
\begin{equation*}
\tilde{F}(z)
= \sum_{\lambda_n\in\sigma(S_{a,\gamma})}
	\frac{K_a(z,\lambda_n)}{K_a(\lambda_n,\lambda_n)}\, F(\lambda_n).
\end{equation*}
Then,
\begin{equation*}
\abs{F(z)-\tilde{F}(z)}
	\le h_{ab}(z)\int_a^b\abs{\psi(x)}dx,
\end{equation*}
where $\psi\in L^2(0,b)$ satisfies $F(z)=\inner{\xi(\cc{z})}{\psi}_{L^2(0,b)}$.
\end{theorem}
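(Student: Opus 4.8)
The plan is to reduce the statement to a one-line estimate for an integral over $[a,b]$; all the analytic content has already been placed in Proposition~\ref{prop:condition-aliasing-general} and Lemma~\ref{propiedades de xi extendida}, so the argument amounts to assembling those pieces together with the unitary dictionary of Remark~\ref{rem:unitary-invariants}.

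First I would fix $\gamma\in(0,\pi)$ and record the two standing objects. Since $F\in\cB_b$ and $\Phi:L^2(0,b)\to\cB_b$ is unitary, there is a unique $\psi\in L^2(0,b)$ with $F(z)=\int_0^b\xi(z,x)\psi(x)dx$; because $\xi(z,x)$ is real entire in $z$ one has $\cc{\xi(\cc z,x)}=\xi(z,x)$, so equivalently $F(z)=\inner{\xi(\cc z)}{\psi}_{L^2(0,b)}$, which is the $\psi$ appearing in the statement. (If $F\in\cB_a$, uniqueness forces $\psi$ to be supported in $(0,a]$, so the bound below will be consistent with $\tilde F=F$.) Next, by Proposition~\ref{prop:condition-aliasing-general} the space $\cB_b$ satisfies \ref{hyp:convergence-undersampling} for this $\gamma$; hence Lemma~\ref{propiedades de xi extendida} applies, and the extended kernel $\xi^{\mathrm{ext}}_a(z,x)$ is well defined, continuous in $x\in[0,b]$, real entire in $z$ (each summand $K_a(z,\lambda_n)K_a(\lambda_n,\lambda_n)^{-1}\xi(\lambda_n,x)$ is, since $K_a(z,\lambda_n)=\int_0^a\xi(z,y)\xi(\lambda_n,y)dy$ and $\xi(\lambda_n,\cdot)$ is real), and enjoys properties \ref{lem:xi-ext-is-xi} and \ref{lem:h-is-continuous}.

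The core computation is then immediate. Using Remark~\ref{rem:unitary-invariants} to identify $\sigma(S_{a,\gamma})$ with $\sigma(H_{a,\gamma})$ and the values $F(\lambda_n)$ accordingly, property \ref{lem:xi-ext-inner-psi} of Lemma~\ref{propiedades de xi extendida} (equation \eqref{xi ext prod int}) gives
\begin{equation*}
\tilde F(z)=\sum_{\lambda_n\in\sigma(H_{a,\gamma})}\frac{K_a(z,\lambda_n)}{K_a(\lambda_n,\lambda_n)}F(\lambda_n)=\inner{\xi^{\mathrm{ext}}_a(\cc z)}{\psi}_{L^2(0,b)}=\int_0^b\xi^{\mathrm{ext}}_a(z,x)\psi(x)dx .
\end{equation*}
Subtracting from $F(z)=\int_0^b\xi(z,x)\psi(x)dx$ and invoking property \ref{lem:xi-ext-is-xi} (which says $\xi^{\mathrm{ext}}_a(z,x)=\xi(z,x)$ for a.e.\ $x\in[0,a]$) to discard the integral over $[0,a]$,
\begin{equation*}
F(z)-\tilde F(z)=\int_0^b\bigl(\xi(z,x)-\xi^{\mathrm{ext}}_a(z,x)\bigr)\psi(x)dx=\int_a^b\bigl(\xi(z,x)-\xi^{\mathrm{ext}}_a(z,x)\bigr)\psi(x)dx .
\end{equation*}
Taking absolute values and pulling the supremum over $x\in[a,b]$ out of the integral yields
\begin{equation*}
\abs{F(z)-\tilde F(z)}\le\Bigl(\sup_{x\in[a,b]}\abs{\xi^{\mathrm{ext}}_a(z,x)-\xi(z,x)}\Bigr)\int_a^b\abs{\psi(x)}dx=h_{ab}(z)\int_a^b\abs{\psi(x)}dx ,
\end{equation*}
which is the asserted estimate.

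I do not expect a genuine obstacle in this final step, precisely because everything delicate — the absolute and uniform convergence defining $\xi^{\mathrm{ext}}_a$, its continuity in $x$, the pairing identity \eqref{xi ext prod int}, and the agreement $\xi^{\mathrm{ext}}_a(z,\cdot)=\xi(z,\cdot)$ on $[0,a]$ — is supplied by Proposition~\ref{prop:condition-aliasing-general} and Lemma~\ref{propiedades de xi extendida}. The only points demanding care are the bookkeeping of complex conjugates in the pairing $\inner{\cdot}{\cdot}_{L^2(0,b)}$, which is handled by the real-entireness of $\xi$ and $\xi^{\mathrm{ext}}_a$ in $z$, and the justification that property \ref{lem:xi-ext-is-xi} legitimately reduces the integral to $[a,b]$.
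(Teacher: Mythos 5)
Your proposal is correct and is precisely the assembly the paper intends: the paper states the theorem "follows from" Remark~\ref{rem:unitary-invariants}, Proposition~\ref{prop:condition-aliasing-general} and Lemma~\ref{propiedades de xi extendida}, and your argument — writing $F(z)=\inner{\xi(\cc z)}{\psi}_{L^2(0,b)}$, identifying $\tilde F(z)=\inner{\xi^{\mathrm{ext}}_a(\cc z)}{\psi}_{L^2(0,b)}$ via \eqref{xi ext prod int}, discarding $[0,a]$ by property \ref{lem:xi-ext-is-xi}, and bounding the remaining integral by $h_{ab}(z)\int_a^b\abs{\psi}$ — is exactly that chain, with the conjugation bookkeeping handled correctly.
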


\begin{remark}
We clearly have
\begin{equation*}
\abs{F(z)-\tilde{F}(z)}
	\le \sqrt{b-a}\,h_{ab}(z)\left(\int_a^b\abs{\psi(x)}^2\right)^{\frac12}dx
	\le C(\K,a,b)\norm{(I-P_{ab})F}_{\cB_b},
\end{equation*}
for all $z\in\K$, where $\K$ is any compact subset of $\C$ and
$P_{ab}$ denotes the orthogonal projector onto $\cB_a$.
\end{remark}

\appendix

\section{Auxiliary results}

\subsection{Computation \#1}
\label{app:first-computation}

Let us recall the identities
\begin{equation*}
\label{eq:identities-bessel-functions}
\cC_\nu(tx) = t^{-1}x^{-\nu - 1}\frac{d}{dx}\left(x^{\nu+1}
				\cC_{\nu+1}(tx)\right),\quad
\cC_{\nu+1}(tx) 
			= - t^{-1}x^{\nu}\frac{d}{dx}\left(x^{-\nu}\cC_{\nu}(tx)\right),
\end{equation*}
where $\cC_\nu$ denotes either $J_\nu$ or $Y_\nu$ (see \cite[Eq.~10.22.1]{nist}).
Applied to \eqref{eq:fundamental-solution-unperturbed-good} and
\eqref{eq:fundamental-solution-unperturbed-bad}, they imply
\begin{equation}
\label{eq:identities-good}
\xi_\nu(t^2,x) = x^{-\nu-\frac12}\partial_x\!
				\left(x^{\nu+\frac12}\xi_{\nu+1}(t^2,x)\right),\quad
\xi_{\nu+1}(t^2,x) = - t^{-2}x^{\nu+\frac12}\partial_x\!
				\left(x^{-\nu-\frac12}\xi_{\nu}(t^2,x)\right),
\end{equation}
and
\begin{equation}
\label{eq:identities-bad}
\theta_\nu(t^2,x) = t^{-2} x^{-\nu-\frac12}\partial_x\!
				\left(x^{\nu+\frac12}\theta_{\nu+1}(t^2,x)\right),\quad
\theta_{\nu+1}(t^2,x) = - x^{\nu+\frac12}\partial_x\!
				\left(x^{-\nu-\frac12}\theta_{\nu}(t^2,x)\right).
\end{equation}
Fix $\epsilon>0$ small. A double integration by parts involving 
\eqref{eq:identities-good} yields
\begin{align*}
\int_\epsilon^a \xi(z,x)\xi_\nu(t^2,x)dx
	&= \left. \xi(z,x)\xi_{\nu+1}(t^2,x)\right|_\epsilon^a
	+ \left.\frac{1}{t^2}\left(\xi'(z,x)
			- (\nu + \tfrac12)x^{-1}\xi(z,x)\right)\xi_{\nu}(t^2,x)\right|_\epsilon^a
			\nonumber
	\\[1mm]
	&\qquad -\left.\frac{1}{t^2}
			\int_\epsilon^a x^{-\nu-\frac12}\partial_x\!
			\left(x^{2\nu+1}\partial_x\!
			\left(x^{-\nu-\frac12}\xi(z,x)\right)\right)
			\xi_{\nu}(t^2,x)dx\right..\nonumber
\end{align*}
Recalling \eqref{eq:boundary-left-condition-limit-circle},
\eqref{eq:A1-kost-sak-tes-unperturb} and \eqref{eq:estimate-xi-one}, taking the
limit $\epsilon\to 0$, and using that $(\tau-z)\xi(z,\cdot)=0$, one obtains
\begin{align}
\inner{\xi(z)}{\xi_\nu(t^2)}_{L^2(0,a)}
	&= \left. \xi(z,a)\xi_{\nu+1}(t^2,a)\right.
			\nonumber
	\\[1mm]
	&\qquad 
	+ \left.\frac{1}{t^2}\left(\xi'(z,a)
			- (\nu + \tfrac12)a^{-1}\xi(z,a)\right)\xi_{\nu}(t^2,a)\right.
			\nonumber
	\\[1mm]
	&\qquad - \left.\frac{1}{t^2}
		\int_0^a(q(x)-z)\xi(z,x)\xi_\nu(t^2,x)dx\right..
	\label{eq:for-aliasing-1}
\end{align}
Similarly,
\begin{align}
\int_a^b \frac{b-x}{b-a}\xi(z,x)&\xi_\nu(t^2,x)dx\nonumber
	\\[1mm]
	&= -\left. \xi(z,a)\xi_{\nu+1}(t^2,a)\right.
	-\left.\frac{1}{t^2}\left(\xi'(z,a)
				-(\nu+\tfrac12)a^{-1}\xi(z,a)\right)\xi_{\nu}(t^2,a)\right.
		\nonumber
	\\[1mm]
	&\qquad +\left.\frac{\xi(z,a)}{t^2(b-a)}\xi_{\nu}(t^2,a)
				-\frac{\xi(z,b)}{t^2(b-a)}\xi_{\nu}(t^2,b)\right.
		\nonumber
	\\[1mm]
	&\qquad -\left.\frac{1}{t^2}
			\int_a^b \frac{b-x}{b-a}(q(x)-z)\xi(z,x)\xi_{\nu}(t^2,x)dx\right.
		\nonumber
	\\[1mm]
	&\qquad - \left.\frac{2}{t^2}
		\int_a^b\frac{1}{b-a}\xi'(z,x)\xi_\nu(t^2,x)dx\right..
	\nonumber
\end{align}
Thus,
\begin{align}
\inner{\xi({z})}{R_{ab}\xi_\nu(t^2)}_{L^2(0,b)}
	&= -\,\frac{1}{t^2}\left.\frac{1}{b-a}
		\left(\xi(z,b)\xi_\nu(t^2,b) - \xi(z,a)\xi_\nu(t^2,a)\right)
		\right.
	\nonumber
	\\
	&\qquad -\frac{1}{t^2}\left.\int_0^bR(x)(q(x)-z)\xi(z,x)\xi_\nu(t^2,x)dx\right.
	\nonumber
	\\[1mm]
	&\qquad - \left.\frac{2}{t^2}
		\int_a^b\frac{1}{b-a}\xi'(z,x)\xi_\nu(t^2,x)dx\right..
	\label{eq:first-result}
\end{align}

\subsection{Computation \#2}
\label{app:second-computation}

Fix some arbitrarily small $\epsilon>0$. 
Using \eqref{eq:identities-good} and integration by parts, one 
obtains
\begin{align}
\int_\epsilon^a \xi(z,x)&Q_1(t^2,x)\xi_\nu(t^2,x) dx
	\nonumber
	\\[1mm]
	&=\left.\xi(z,x)Q_1(t^2,x)\xi_{\nu+1}(t^2,x)\right|_\epsilon^a
	\nonumber
	\\[1mm]
	&\qquad - \frac{1}{t^2}
			\left.\left((\nu+\tfrac12)x^{-1}\xi(z,x) - \xi'(z,x)\right)
				Q_1(t^2,x)\xi_{\nu}(t^2,x)\right|_\epsilon^a
	\nonumber
	\\[1mm]
	&\qquad + \frac{1}{t^2}
			\left.\xi(z,x)q(x)\theta_{\nu}(t^2,x)(\xi_{\nu}(t^2,x))^2\right|_\epsilon^a
	\nonumber
	\\[1mm]
	&\qquad - \frac{1}{t^2}
			\int_\epsilon^a(q(x)-z)\xi(z,x)Q_1(t^2,x)\xi_{\nu}(t^2,x)dx
	\nonumber
	\\[1mm]
	&\qquad - \frac{2}{t^2}
			\int_\epsilon^a\xi'(z,x)q(x)\theta_{\nu}(t^2,x)(\xi_{\nu}(t^2,x))^2dx
	\nonumber
	\\[1mm]
	&\qquad - \frac{1}{t^2}
			\int_\epsilon^a\xi(z,x)Q_1''(t^2,x)\xi_{\nu}(t^2,x) dx.
	\nonumber
\end{align}
Note that we require $q$ to be locally absolutely continuous in order to make sense
of $Q_i''(z,x)$ ($i=1,2$).

Similarly but now using \eqref{eq:identities-bad},
\begin{align}
\int_\epsilon^a \xi(z,x)&Q_2(t^2,x)\theta_\nu(t^2,x) dx
	\nonumber
	\\[1mm]
	&=\frac{1}{t^2}\left.\xi(z,x)Q_2(t^2,x)\theta_{\nu+1}(t^2,x)\right|_\epsilon^a
	\nonumber
	\\[1mm]
	&\qquad - \frac{1}{t^2}
			\left.\left((\nu+\tfrac12)x^{-1}\xi(z,x)-\xi'(z,x)\right)
				Q_2(t^2,x)\theta_{\nu}(t^2,x)\right|_\epsilon^a
	\nonumber
	\\[1mm]
	&\qquad + \frac{1}{t^2}
			\left.\xi(z,x)q(x)\theta_{\nu}(t^2,x)(\xi_{\nu}(t^2,x))^2\right|_\epsilon^a
	\nonumber
	\\[1mm]
	&\qquad - \frac{1}{t^2}
			\int_\epsilon^a(q(x)-z)\xi(z,x)Q_2(t^2,x)\theta_{\nu}(t^2,x)dx
	\nonumber
	\\[1mm]
	&\qquad - \frac{2}{t^2}
			\int_\epsilon^a\xi'(z,x)q(x)\theta_{\nu}(t^2,x)(\xi_{\nu}(t^2,x))^2dx
	\nonumber
	\\[1mm]
	&\qquad - \frac{1}{t^2}
			\int_\epsilon^a\xi(z,x)Q_2''(t^2,x)\theta_{\nu}(t^2,x) dx.
	\nonumber
\end{align}
Since $W(\xi_\nu(z),\theta_\nu(z))=1$,
\begin{equation*}
Q_1''(t^2,x)\xi_{\nu}(t^2,x) - Q_2''(t^2,x)\theta_{\nu}(t^2,x)
	= q(x) \xi_{\nu}(t^2,x).
\end{equation*}
Hence, recalling \eqref{eq:the-other-xi},
\begin{align*}
\int_\epsilon^a\xi(z,x)\xi_{\nu,1}(t^2,x)dx
	\nonumber
	&= \left.\xi(z,x)Q_1(t^2,x)\xi_{\nu+1}(t^2,x)\right|_\epsilon^a
		- \frac{1}{t^2}\left.\xi(z,x)Q_2(t^2,x)\theta_{\nu+1}(t^2,x)\right|_\epsilon^a
	\nonumber
	\\[1mm]
	&\qquad - \frac{1}{t^2}
		\left.\left((\nu + \tfrac12)x^{-1}\xi(z,x) -
		\xi'(z,x)\right)\xi_{\nu,1}(t^2,x)\right|_\epsilon^a
	\nonumber
	\\[1mm]
	&\qquad - \frac{1}{t^2}
			\int_\epsilon^a(q(x)-z)\xi(z,x)\xi_{\nu,1}(t^2,x)dx
	\nonumber
	\\
	&\qquad - \frac{1}{t^2}
			\int_\epsilon^a \xi(z,x)q(x)\xi_{\nu}(t^2,x)dx.
\end{align*}
Clearly,
\begin{equation*}
\lim_{x\to 0+} \xi(z,x)Q_1(t^2,x)\xi_{\nu+1}(t^2,x) = 0,
\end{equation*}
for every $z\in\C$ and $t>0$. Also, \eqref{eq:another-estimate} in conjunction 
with
\begin{equation*}
\theta_\nu(t^2,x) 
	= \frac{\Gamma(\nu+1)2^{\nu-\frac12}}{x^{\nu-\frac12}\nu\pi^{\frac12}}
	\begin{cases}
		g_\nu((tx)^2),& \nu\in\R_+\setminus\N,
		\\[1mm]
		g_\nu((tx)^2) - \frac{\nu(tx)^{2\nu}\log(x)}{\Gamma(\nu+1)^22^{2\nu-1}}
		f_\nu((tx)^2),& \nu\in\N,
	\end{cases}
\end{equation*}
where $f_\nu(0) = g_\nu(0) = 0$ \cite[Eq.~2.7]{kostenko}, imply
\begin{equation*}
\lim_{x\to 0+} \xi(z,x)Q_2(t^2,x)\theta_{\nu+1}(t^2,x) = 0.
\end{equation*}
Moreover, \eqref{eq:boundary-left-condition-limit-circle} and 
\eqref{eq:estimate-xi-one} yield
\begin{equation*}
\lim_{x \to 0+}
	\left((\nu + \tfrac12)x^{-1}\xi(z,x) -\xi'(z,x)\right)\xi_{\nu,1}(t^2,x)= 0.
\end{equation*}
Therefore,
\begin{align}
\inner{\xi(\cc{z})}{\xi_{\nu,1}(t^2)}_{L^2(0,a)}
	\nonumber
	&= \left.\xi(z,a)Q_1(t^2,a)\xi_{\nu+1}(t^2,a)\right.
		- \frac{1}{t^2}\left.\xi(z,a)Q_2(t^2,a)\theta_{\nu+1}(t^2,a)\right.
	\nonumber
	\\[1mm]
	&\qquad - \frac{1}{t^2}
		\left.\left((\nu + \tfrac12)a^{-1}\xi(z,a) -
		\xi'(z,a)\right)\xi_{\nu,1}(t^2,a)\right.
	\nonumber
	\\[1mm]
	&\qquad - \frac{1}{t^2}
			\int_0^a(q(x)-z)\xi(z,x)\xi_{\nu,1}(t^2,x)dx
	\nonumber
	\\
	&\qquad - \frac{1}{t^2}
			\int_0^a \xi(z,x)q(x)\xi_{\nu}(t^2,x)dx.
	\label{eq:next-to-last}
\end{align}

\subsection{Computation \#3}
\label{app:third-computation}

Much in the same vein as before, one obtains
\begin{align*}
\int_a^b \frac{b-x}{b-a}\xi(z,x)&Q_1(t^2,x)\xi_\nu(t^2,x) dx
	\\[1mm]
	&=-\left.\xi(z,a)Q_1(t^2,a)\xi_{\nu+1}(t^2,a)\right.
	\\[1mm]%
	&\qquad + \frac{1}{t^2}
			\left.\left((\nu+\tfrac12)a^{-1}\xi(z,a) - \xi'(z,a)\right)
				Q_1(t^2,a)\xi_{\nu}(t^2,a)\right.
	\\[1mm]
	&\qquad - \frac{1}{t^2}\left.\frac{1}{b-a}
			\xi(z,x)Q_1(t^2,x)\xi_\nu(t^2,x)\right|^b_a
	\\[1mm]
	&\qquad - \frac{1}{t^2}
			\left.\xi(z,a)q(a)\theta_{\nu}(t^2,a)(\xi_{\nu}(t^2,a))^2\right.
	\\[1mm]
	&\qquad - \frac{1}{t^2}
			\int_a^b\frac{b-x}{b-a}(q(x)-z)\xi(z,x)Q_1(t^2,x)\xi_{\nu}(t^2,x)dx
	\\[1mm]
	&\qquad - \frac{2}{t^2}
			\int_a^b\frac{b-x}{b-a}\xi'(z,x)q(x)\theta_{\nu}(t^2,x)(\xi_{\nu}(t^2,x))^2dx
	\\[1mm]
	&\qquad - \frac{1}{t^2}
			\int_a^b\frac{b-x}{b-a}\xi(z,x)Q_1''(t^2,x)\xi_{\nu}(t^2,x) dx
	\nonumber
	\\[1mm]
	&\qquad + \frac{2}{t^2}
			\int_a^b\frac{1}{b-a}\xi(z,x)q(x)\theta_{\nu}(t^2,x)(\xi_{\nu}(t^2,x))^2dx
	\\[1mm]
	&\qquad + \frac{2}{t^2}
			\int_a^b\frac{1}{b-a}\xi'(z,x)Q_1(t^2,x)\xi_{\nu}(t^2,x)dx.
	\nonumber
\end{align*}
Also,
\begin{align*}
\int_a^b \frac{b-x}{b-a}\xi(z,x)&Q_2(t^2,x)\theta_\nu(t^2,x) dx
	\\
	&= -\,\frac{1}{t^2}\left.\xi(z,a)Q_2(t^2,a)\theta_{\nu+1}(t^2,a)\right.
	\\[1mm]
	&\qquad + \frac{1}{t^2}
			\left((\nu + \tfrac12)a^{-1}\xi(z,a)-\xi'(z,a)\right)
				Q_2(t^2,a)\theta_{\nu}(t^2,a)
	\\[1mm]
	&\qquad - \frac{1}{t^2}\left.\frac{1}{b-a}
			\xi(z,x)Q_2(t^2,x)\theta_{\nu}(t^2,x)\right|^b_a
	\\[1mm]
	&\qquad - \frac{1}{t^2}
			\left.\xi(z,a)q(a)\theta_{\nu}(t^2,a)(\xi_{\nu}(t^2,a))^2\right.
	\\[1mm]
	&\qquad - \frac{1}{t^2}
			\int_a^b\frac{b-x}{b-a}(q(x)-z)\xi(z,x)Q_2(t^2,x)\theta_{\nu}(t^2,x)dx
	\\[1mm]
	&\qquad - \frac{2}{t^2}
			\int_a^b\frac{b-x}{b-a}\xi'(z,x)q(x)\theta_{\nu}(t^2,x)(\xi_{\nu}(t^2,x))^2dx
	\\[1mm]
	&\qquad - \frac{1}{t^2}
			\int_a^b\frac{b-x}{b-a}\xi(z,x)Q_2''(t^2,x)\theta_{\nu}(t^2,x) dx
	\\[1mm]
	&\qquad + \frac{2}{t^2}
			\int_a^b\frac{1}{b-a}\xi(z,x)q(x)\theta_{\nu}(t^2,x)(\xi_{\nu}(t^2,x))^2dx
	\\[1mm]
	&\qquad + \frac{2}{t^2}
			\int_a^b\frac{1}{b-a}\xi'(z,x)Q_2(t^2,x)\theta_{\nu}(t^2,x)dx.
\end{align*}
Hence,
\begin{align}
\int_a^b \frac{b-x}{b-a}\xi(z,x)&\xi_{\nu,1}(t^2,x) dx
	\nonumber
	\\[1mm]
	&=-\left.\xi(z,a)Q_1(t^2,a)\xi_{\nu+1}(t^2,a)\right.
		+\frac{1}{t^2}\left.\xi(z,a)Q_2(t^2,a)\theta_{\nu+1}(t^2,a)\right.
	\nonumber
	\\[1mm]
	&\qquad + \frac{1}{t^2}
			\left.\left((\nu+\tfrac12)a^{-1}\xi(z,a) - \xi'(z,a)\right)
				\xi_{\nu,1}(t^2,a)\right.
	\nonumber
	\\[1mm]
	&\qquad - \frac{1}{t^2}\left.\frac{1}{b-a}
			\xi(z,x)\xi_{\nu,1}(t^2,x)\right|^b_a
	\nonumber
	\\[1mm]
	&\qquad - \frac{1}{t^2}
			\int_a^b\frac{b-x}{b-a}(q(x)-z)\xi(z,x)\xi_{\nu,1}(t^2,x)dx
	\nonumber
	\\[1mm]
	&\qquad - \frac{1}{t^2}
			\int_a^b\frac{b-x}{b-a}\xi(z,x)q(x)\xi_{\nu}(t^2,x) dx.
	\nonumber
	\\[1mm]
	&\qquad + \frac{2}{t^2}
			\int_a^b\frac{1}{b-a}\xi'(z,x)\xi_{\nu,1}(t^2,x)dx.
	\label{eq:last-computation}
\end{align}
Therefore, adding \eqref{eq:next-to-last} to \eqref{eq:last-computation},
\begin{align}
\inner{\xi(\cc{z})}{R_{ab}\xi_{\nu,1}(t^2)}_{L^2(0,b)}
	&= -\, \frac{1}{t^2}\left.\frac{1}{b-a}
			\left(\xi(z,b)\xi_{\nu,1}(t^2,b)-\xi(z,a)\xi_{\nu,1}(t^2,a)\right)\right.
	\nonumber
	\\[1mm]
	&\qquad - \frac{1}{t^2}
			\int_0^b R(x)(q(x)-z)\xi(z,x)\xi_{\nu,1}(t^2,x)dx
	\nonumber
	\\[1mm]
	&\qquad - \frac{1}{t^2}
			\int_0^b R(x)q(x)\xi(z,x)\xi_\nu(t^2,x)dx
	\nonumber
	\\[1mm]
	&\qquad + \frac{2}{t^2}
			\int_a^b\frac{1}{b-a}\xi'(z,x)\xi_{\nu,1}(t^2,x)dx.
	\label{eq:last}
\end{align}

\section*{Acknowledgments}
Research partially supported by CONICET (Argentina) through grant PIP 
11220150100327CO.
Part of this work was done while J. H. T. visited IIMAS–UNAM (Mexico). 
He deeply thanks them for their kind hospitality.


\end{document}